\titleformat{\subsection}{\it}{\thesubsection.\enspace}{1.5pt}{}
\titleformat{\subsubsection}{\it}{\thesubsubsection.\enspace}{1.5pt}{}
\newtheorem{theo}{Theorem}[section]
\newtheorem{lemm}[theo]{Lemma}
\newtheorem{prop}[theo]{Proposition}
\newtheorem{rema}{Remark}[section]
\numberwithin{equation}{section}
\def\lm{\le}
\def\p{\partial}
\def\a{\alpha}
\def\th2{\frac{\theta}{2}}
\def\dive{\mathop{\rm div}\nolimits}
\def\r{\varrho}
\def\lg{\geqslant}
\def\R{\mathbb{R}}
\begin{document}
\title{The Optimal Decay Rate of Strong Solution for the Compressible Navier-Stokes Equations
        with Large Initial Data  \hspace{-4mm}}
\author{Jincheng Gao$^\dag$ \quad Zhengzhen Wei$^\ddag$ \quad Zheng-an Yao$^\sharp$ \\[10pt]
\small {School of Mathematics, Sun Yat-Sen University,}\\
\small {510275, Guangzhou, P. R. China}\\[5pt]
}

\footnotetext{Email: \it $^\dag$gaojc1998@163.com,
\it $^\ddag$weizhzh5@mail2.sysu.edu.cn,
\it $^\sharp$mcsyao@mail.sysu.edu.cn}
\date{}

\maketitle

\begin{abstract}

In recent paper \cite{he-huang-wang}, it is shown that the upper decay rate of
global solution of compressible Navier-Stokes(CNS) equations converging to
constant equilibrium state $(1, 0)$ in $H^1-$norm is $(1+t)^{-\frac34(\frac{2}{p}-1)}$
when the initial data is large and belongs to $H^2(\R^3) \cap L^p(\R^3) (p\in[1,2))$.
Thus, the first result in this paper is devoted to showing that the upper decay rate of the
first order spatial derivative converging to zero in $H^1-$norm is $(1+t)^{-\frac32(\frac1p-\frac12)-\frac12}$.
For the case of $p=1$, the lower bound of decay rate for the global solution of CNS equations converging to
constant equilibrium state $(1, 0)$ in $L^2-$norm is $(1+t)^{-\frac{3}{4}}$ if the initial data
satisfies some low frequency assumption additionally.
In other words, the optimal decay rate for the global solution of CNS equations converging to constant
equilibrium state in $L^2-$norm is $(1+t)^{-\frac{3}{4}}$ although the associated initial data is large.

\vspace*{5pt}
\noindent{\it {\rm Keywords}}: Compressible Navier-Stokes equations; optimal decay rate; large initial data.

\vspace*{5pt}
\noindent{\it {\rm 2010 Mathematics Subject Classification:}}\ {\rm 35Q35 , 76N10, 93D20}
\end{abstract}


\section{Introduction}
\quad
In this paper, we are concerned with the upper and lower bounds of decay rate
for large solution to the three dimensional barotropic compressible Navier-Stokes(CNS) equations:
\begin{equation}\label{cns}
  	\left\{\begin{aligned}
  &\partial_t\rho+\dive(\rho u)=0,\\
   &\partial_t(\rho u)+\dive(\rho u\otimes u)-\mu\Delta u-(\mu+\lambda)\nabla\dive u+\nabla P=0,\\
 & \underset{|x|\rightarrow\infty}{\lim}(\rho-1, u)(x, t)=(0,0),
  	\end{aligned}\right.
  \end{equation}
where $(x, t)\in \mathbb{R}^3 \times \mathbb{R}^+$.
The unknown functions $\rho, u=(u_1, u_2, u_3)$ and $P$ represent
the density, velocity and pressure respectively.
The pressure $P$ is given by smooth function $P=P(\rho)=\rho^\gamma$ with the adiabatic exponent $\gamma \ge 1$.
The constants $\mu$ and $\lambda$ are the viscosity coefficients,
which satisfies the following conditions:
$\mu>0$, $2\mu+3\lambda \ge0$.
To complete system \eqref{cns}, the initial data is given by
\begin{equation*}
\left.(\rho, u)(x, t)\right|_{t=0}=(\rho_0(x), u_0(x)).
\end{equation*}

Compressible Navier-Stokes equations \eqref{cns} govern the motion of a compressible viscous barotropic fluid, and there are many literatures on the compressible Navier-Stokes equations because of its physical importance and mathematical challenges. Here we review some results which are related to well-posedness.
When the initial data is away from vacuum, Nash \cite{nash} proved the local well-posedness
for the compressible Navier-Stokes equations. Matsumura and Nishida \cite {matsumura} first established the global existence with the small initial data in $H^3-$framework. Later, Valli \cite{valli} and Kawashita \cite{kawa} obtained
the global existence with the small initial data in $H^2-$framework. Recently, Huang, Li and Xin \cite{Huang-li-xin} proved the global existence and uniqueness of system \eqref{cns} with the density containing vacuum in the condition that the initial energy is small. For further results about the well-posedness,
we refer the readers to \cite{guo-wang, li hl} and the references therein.

The decay problem has been one of main interests in mathematical fluid dynamics, there are many interesting
work has been obtained.
The optimal decay rate of strong solution was addressed in whole space firstly by Matsumura and Nishida \cite{nishida},
and the optimal $L^p(p\ge 2)$ decay rate is established by Ponce \cite{ponce}.
The authors obtained the optimal decay rate for Navier-Stokes system with an external potential force in series of papers \cite{duan1, duan2, u-y-z}. By assuming the initial perturbation is bounded in $\dot{H}^{-s}$ rather than $L^1$,
Guo and Wang \cite{guo-wang} built
the time decay rate for the solution of system \eqref{cns} by using a general energy method. It should be emphasized that their method in \cite{guo-wang} can be used to many other kinds of equations, such as Boltzmann equation, as well as some related fluid models. Many other results for the decay problem for the isentropic or non-isentropic Navier-Stokes equations, one can refer to \cite{wang hq, li-zhang1, liu-wang, wang yj} and the references therein.

However, the most of above decay results are established under the condition that the initial data is a small perturbation of constant equilibrium state. A interesting question is what may happen about the large time behavior of global strong solution with general initial data. Very recently, He, Huang and Wang \cite{he-huang-wang} proved global stability of large solution to the system \eqref{cns}. Let us give a short review of their work. By using some techniques about the blow-up criterion come from \cite{Huang-li-xin, Huang-li, sun-wang1, sun-wang2, wen}, and assuming the density is bounded uniformly in time in $C^{\alpha}$ with $\alpha$ arbitrarily small, that is $\sup_{t\ge0}\|\rho(t)\|_{C^\a}\lm M$, they obtained uniform-in-time bounds for the global solution. This allows them to improve the dissipation inequality, thus they could apply Fourier splitting method(see \cite{schonbek1}) to obtain a new method for the convergence to the constant equilibrium. It should be mentioned that they also constructed global large solution with a class of initial data which is far away from equilibrium.
Specifically, they established upper decay rate
\begin{equation}\label{Decay-large}
\|(\rho-1)(t)\|_{H^1}+\|u(t)\|_{H^1}\lm C(1+t)^{-\frac34(\frac{2}{p}-1)}.
\end{equation}
Here the initial data $(\rho_0-1,u_0 )\in L^{p}(\R^3)\cap H^2(\R^3)$ with $p\in[1,2]$.
As mentioned in \cite{he-huang-wang}(see Remark $1.2$), the decay rate \eqref{Decay-large},
which the solution itself converges to the constant equilibrium state $(1, 0)$,
is optimal compared to the heat equation.
However, the decay rate \eqref{Decay-large} shows that the first order spatial derivative of
solution converges to zero at the $L^2-$rate $(1+t)^{-\frac34(\frac{2}{p}-1)}$, which seems not optimal.
At the same time, the decay rate \eqref{Decay-large} does not provide any information
whether the second order spatial derivative of solution will converge to zero or not?\textit{
Thus, our first purpose is not only to establish optimal decay rate for the solution's first order spatial derivative,
but also prove that the second order spatial derivative of global solution will converge to zero.}

Finally, we address the lower bound of decay for the global strong solution investigated
in \cite{he-huang-wang} with large initial data.
For the case of incompressible flows, there are many mathematical results about decay to the
incompressible Navier-Stokes equations, for upper bound of decay rate to weak solution \cite{schonbek1},
for upper bound of decay rate with smooth initial data \cite{{duyi},{schonbek2},{Schonbek-Wiegner}},
for lower bound of decay rate \cite{{schonbek3},{Schonbek-JAMS}}, and both upper and lower bounds of decay for
the higher order spatial derivative \cite{titi}.
For the case of compressible flow, there are many results of lower bound of decay rate for the
CNS equations and related models,
such as CNS equations \cite{{Kagei-Kobayashi-2002},{li-zhang1}},
compressible Navier-Stokes-Poission equations \cite{{Li-Matsumura-Zhang},{Zhang-Li-Zhu}},
and compressible viscoelastic flows \cite{Hu-Wu}.
We point out that all these lower bounds of decay were obtained under the condition of small initial data.
\textit{Thus, our second purpose is to address lower bound of decay rate of global solution
investigated in \cite{he-huang-wang} with large initial data.}

Before state the main results of our paper, we have to introduce some notation.

 \textbf{Notation:} In this paper, we use $H^s(s\in\mathbb{R}^3)$ to denote the usual Sobolev space with norm $\|\cdot\|_{H^s}$ and $L^p(\mathbb{R}^3)$ to denote the usual $L^p$ space with norm $\|\cdot\|_{L^p}$. $\mathcal{F}(f):=\widehat{f}$ represents the usual Fourier transform of the function $f$.
 For the sake of simplicity, we write $\int f dx:=\int _{\mathbb{R}^3} f dx$
 and $\|(A, B)\|_X:=\| A \|_X+\| B\|_X$.
 The constant $C$ denotes the generic positive constant independent of time, and may change from line to line.

 First of all, we recall the following results obtained in \cite{he-huang-wang}, which will be used in this paper frequently.
 \begin{theo}(see \cite{he-huang-wang})\label{Thm1}
 Let $\mu>\frac12\lambda$, and $(\rho,u)$ be a global and smooth solution of \eqref{cns} with initial data $(\rho_0,u_0)$ where $\rho_0\ge c>0$. Suppose the admissible condition holds:
 $$ \partial_tu\mid_{t=0}=-u_0\cdot\nabla u_0+\frac{1}{\rho_0}Lu_0-\frac{1}{\rho_0}\nabla \rho_0^\gamma,$$
 where operator $L$ is defined by $Lu=-\dive(\mu\nabla u)-\nabla((\lambda+\mu)\dive u)$. Assume that $\r:=\rho-1$, and $\sup_{t\ge0}\|\rho(t)\|_{C^\a}\lm M$ for small $0<\a<1$. Then if $\r_0,u_0 \in L^{p}(\R^3)\cap H^2(\R^3)$ with $p\in[1,2]$, we have\\
(1)\textbf{(Lower bound of the density)}\\
 \quad There exists a positive constant $\underline{\rho}=\underline{\rho}(c,M)$
 such that for all $t\ge0$
 \begin{equation}\label{Lower}
 \rho(t)\ge\underline{\rho}.
  \end{equation}
(2)\textbf{(Uniform-in-time bounds for the regularity of the solution)}
 \begin{equation}\label{uniform-bound}
 \|\r\|^2_{L^\infty(H^2)}+\|u\|^2_{L^\infty(H^2)}+\int_0^\infty(\|\nabla\r\|^2_{H^1}+\|\nabla u\|^2_{H^2})d\tau\lm C(\underline{\rho},M,\|\r_0\|_{H^2},\|u_0\|_{H^2}).
 \end{equation}
(3)\textbf{(Decay estimate for the solution)}
 \begin{equation}\label{decay}
 \|u(t)\|_{H^1}+\|\r(t)\|_{H^1}\lm
 C(\underline{\rho},M,\|\r_0\|_{L^{p}\cap H^1},\|u_0\|_{L^{p}\cap H^2})(1+t)^{-\beta(p)},
 \end{equation}
 where $\beta(p)=\frac34(\frac{2}{p}-1)$.
 \end{theo}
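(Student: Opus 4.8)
The plan is to establish the three assertions in the stated order, since each relies on the previous one together with the structural hypothesis $\sup_{t\ge0}\|\rho(t)\|_{C^\a}\lm M$ (which already furnishes the upper bound $\rho\le M$). For the lower bound of the density I would pass to Lagrangian coordinates: writing the continuity equation as $\frac{D}{Dt}\log\rho=-\dive u$ along particle paths $\dot X=u(X,t)$, the bound \eqref{Lower} reduces to controlling $\int_0^t \dive u(X(s),s)\,ds$ from above. The natural tool is the effective viscous flux $F:=(2\mu+\lambda)\dive u-(\rho^\gamma-1)$, which satisfies the elliptic relation $\Delta F=\dive(\rho\dot u)$ with $\dot u=\p_t u+u\cdot\nabla u$, and hence enjoys a gain of regularity. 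Combining the $C^\a$ bound on $\rho$ with the basic energy identity, I would bound the accumulated compression and conclude $\rho(t)\ge\underline{\rho}(c,M)>0$ for all $t$.

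The second assertion is the analytic core. I would begin from the basic energy identity controlling $\|\sqrt\rho\,u\|_{L^2}$, the potential energy, and $\int_0^\infty\|\nabla u\|_{L^2}^2\,d\tau$, and then propagate $H^2$ regularity by differentiating the momentum equation and testing against the material derivative $\dot u$, rather than against $\p_t u$ directly; this Hoff-type device avoids losing a derivative through the pressure term. The decisive point is that, with \emph{both} the upper bound $\rho\le M$ and the lower bound $\rho\ge\underline{\rho}$ now available, every nonlinear commutator can be absorbed \emph{without} any smallness assumption on the data. Closing a Gronwall-type inequality that is uniform in time, and invoking the blow-up criteria of \cite{Huang-li-xin, Huang-li, sun-wang1, sun-wang2, wen} to rule out finite-time singularity, I would upgrade the local solution to a global one and obtain exactly the uniform bounds \eqref{uniform-bound}, including the finite time-integrated dissipation $\int_0^\infty(\|\nabla\r\|_{H^1}^2+\|\nabla u\|_{H^2}^2)\,d\tau<\infty$.

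For the decay estimate \eqref{decay} I would exploit the uniform bounds to upgrade the dissipation inequality into a genuine Lyapunov inequality of the form $\frac{d}{dt}\|(\r,u)\|_{L^2}^2+c\|\nabla(\r,u)\|_{L^2}^2\lm R(t)$, with a remainder $R(t)$ controllable by the $H^2$ bounds. I would then apply Schonbek's Fourier-splitting method \cite{schonbek1}: partition frequency space into the ball $|\xi|\lm r(t)$ with $r(t)^2\sim(1+t)^{-1}$ and its complement, estimate the low-frequency contribution through the $L^p$--$L^2$ smoothing of the linearized semigroup about $(1,0)$ using $(\r_0,u_0)\in L^p(\R^3)$, and bootstrap the resulting rate back into the inequality. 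This produces $\|(\r,u)(t)\|_{H^1}\lm C(1+t)^{-\beta(p)}$ with $\beta(p)=\frac34(\frac2p-1)$.

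The main obstacle is the second step. For large data there is no smallness to dominate the nonlinearities, so the $C^\a$ hypothesis and the density lower bound must be used sharply, and the material-derivative estimates have to be organized so that the time-integrated dissipation is genuinely finite and independent of $t$. This uniformity is precisely what renders the Fourier-splitting argument in the third step effective and what later underlies the optimal rates pursued in the present paper.
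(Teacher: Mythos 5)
Theorem \ref{Thm1} is not proved in this paper at all: it is quoted from \cite{he-huang-wang}, and the authors merely recall it as the starting point for their own decay analysis. So there is no in-paper proof to compare against; the relevant benchmark is the argument of He, Huang and Wang, whose strategy the introduction summarizes as (i) uniform-in-time bounds obtained with techniques from the blow-up-criterion literature \cite{Huang-li-xin, Huang-li, sun-wang1, sun-wang2, wen} under the standing hypothesis $\sup_{t\ge0}\|\rho(t)\|_{C^\a}\lm M$, followed by (ii) Schonbek's Fourier-splitting method \cite{schonbek1} for the convergence rate. Your three-step outline --- effective viscous flux plus Lagrangian coordinates for the density lower bound \eqref{Lower}, Hoff-type material-derivative energy estimates for the uniform bounds \eqref{uniform-bound}, and Fourier splitting with low-frequency $L^p$--$L^2$ smoothing of the linearized semigroup for \eqref{decay} --- is exactly this strategy, and each ingredient is sound.

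One framing error should be corrected. The theorem is \emph{conditional}: $(\rho,u)$ is assumed from the outset to be a global smooth solution satisfying the $C^\a$ bound, so your step of ``invoking the blow-up criteria to rule out finite-time singularity and upgrade the local solution to a global one'' is both unnecessary and unobtainable --- global existence for general large data is not asserted by \cite{he-huang-wang} (it remains open; they construct global large solutions only for a special class of data), and the blow-up-criterion papers enter solely as a source of techniques for the a priori estimates. Relatedly, you never indicate where the admissible (compatibility) condition on $\partial_t u\mid_{t=0}$ is used: it is what guarantees that the material derivative $\dot u$ is in $L^2$ at $t=0$, so that the test-against-$\dot u$ estimates in your second step can be initialized. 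With those two points repaired, your sketch is a faithful reconstruction of the cited proof rather than an alternative route.
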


 In this paper, we are not only to establish decay rate for the first and second order spatial derivatives
 of solution, but also give the lower bound of decay rate for the solution itself.
 Thus, we require the index $p$ in Theorem \ref{Thm1} satisfies $p\in [1, 2)$.
 Our first result can be stated as follows:

\begin{theo}\label{Thm2}
Define $\r :=\rho-1$,
suppose all the conditions in Theorem \ref{Thm1} hold on,
and let $(\rho, u)$ be the global solution of compressible Navier-Stokes equaitons \eqref{cns} in Theorem \ref{Thm1}.
Then, it holds on for all $t\lg T_1$
\begin{equation}\label{decay1}
\|\nabla \varrho(t)\|_{H^1}
+\|\nabla u(t)\|_{H^1}
+\|\partial_t \r(t)\|_{L^2}
+\|\partial_t  u(t)\|_{L^2}
\lm C(1+t)^{-\frac{3}{4}(\frac2p-1)-\frac{1}{2}}.
\end{equation}
Here $C$ is a constant independent of time,
and $T_1$ is a large constant given in Lemma \ref{lemma25}.
\end{theo}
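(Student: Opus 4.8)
The plan is to recast \eqref{cns} as a perturbation of the linearization around $(1,0)$ and to run an energy argument reinforced by the Fourier splitting method of Schonbek. Writing $\r=\rho-1$, the system takes the form $\p_t\r+\dive u=N_1$ and $\p_t u-\mu\Delta u-(\mu+\lambda)\nabla\dive u+\gamma\nabla\r=N_2$, where $N_1=-\dive(\r u)$ and $N_2$ collects the quadratic-type terms $-u\cdot\nabla u$, $-h(\r)Lu$ and $-g(\r)\nabla\r$, with $h,g$ smooth and vanishing at $0$ and $L$ the operator of Theorem \ref{Thm1}. Since the statement concerns first-order derivatives, I would work at the two levels $k=1,2$ and build the Lyapunov functional
\[
\mathcal{E}(t)=\sum_{k=1}^{2}\frac12\Big(\gamma\|\nabla^k\r\|_{L^2}^2+\|\nabla^k u\|_{L^2}^2\Big)+\ep\int \nabla u\cdot\nabla^2\r\,dx,
\]
which for $\ep$ small is equivalent to $\|\nabla\r\|_{H^1}^2+\|\nabla u\|_{H^1}^2$, the square of the left side of \eqref{decay1}.

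The next step is the dissipation inequality. The pure energy estimates at $k=1,2$ produce the velocity dissipation $\mu\|\nabla^2u\|_{H^1}^2+(\mu+\lambda)\|\nabla\dive u\|_{H^1}^2$ but no density dissipation; the role of the cross term $\ep\int\nabla u\cdot\nabla^2\r\,dx$, once $\p_tu$ is replaced using the momentum equation, is to generate the missing $-c\ep\|\nabla^2\r\|_{L^2}^2$ at the controllable price of $\|\nabla^2u\|_{H^1}^2$. This yields
\[
\frac{d}{dt}\mathcal{E}(t)+C\Big(\|\nabla^2\r\|_{L^2}^2+\|\nabla^2 u\|_{H^1}^2\Big)\lm \mathcal{R}(t),
\]
where $\mathcal{R}(t)$ gathers the nonlinear contributions. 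I would deliberately \emph{not} try to place $\|\nabla\r\|_{L^2}^2$ in the dissipation: for the splitting it suffices that on the high-frequency set $|\xi|^2\lg C_1(1+t)^{-1}$ the available dissipation density $|\xi|^4|\hat\r|^2+(|\xi|^4+|\xi|^6)|\hat u|^2$ dominates $C_1(1+t)^{-1}$ times the full energy density $(|\xi|^2+|\xi|^4)(|\hat\r|^2+|\hat u|^2)$, and this holds there because $|\xi|^4\lg C_1(1+t)^{-1}(|\xi|^2+|\xi|^4)$.

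The main obstacle is $\mathcal{R}(t)$: for large data there is no global smallness with which to absorb the nonlinearities, and terms such as $\int\nabla g(\r)\cdot\nabla^2\r\,\nabla^2u\,dx$ involve only the bounded norm $\|\nabla^2\r\|_{L^2}$. Here I would exploit the large-time restriction $t\lg T_1$ of Lemma \ref{lemma25}: by \eqref{decay} the low-order norms $\|\r\|_{H^1}+\|u\|_{H^1}$ become as small as desired once $t$ is large, while \eqref{uniform-bound} keeps $\|\r\|_{H^2}+\|u\|_{H^2}$ bounded. After integrating by parts to remove all third-order density derivatives (which would exceed the $H^2$ regularity) and applying Gagliardo--Nirenberg interpolation, every summand of $\mathcal{R}(t)$ can be written as a coefficient tending to $0$ as $t\to\infty$, namely a product of a decaying low-order norm from \eqref{decay} with bounded high-order norms from \eqref{uniform-bound}, multiplied by a quantity already appearing in the dissipation; the delicate point is to always retain the dissipative factor rather than Young it into a non-dissipative residual. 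Consequently, for $t\lg T_1$ one has $\mathcal{R}(t)\lm\frac{C}{2}(\|\nabla^2\r\|_{L^2}^2+\|\nabla^2u\|_{H^1}^2)$, and the inequality collapses to $\frac{d}{dt}\mathcal{E}+\frac{C}{2}(\|\nabla^2\r\|_{L^2}^2+\|\nabla^2u\|_{H^1}^2)\lm0$. This absorption step is precisely what forces the conclusion to hold only for $t\lg T_1$.

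Finally I would close by Fourier splitting. Choosing $S(t)=\{\xi:|\xi|^2\lm C_1(1+t)^{-1}\}$, the high-frequency comparison above gives $\|\nabla^2\r\|_{L^2}^2+\|\nabla^2u\|_{H^1}^2\lg \frac{C_1}{1+t}\big(\mathcal{E}(t)-\int_{S(t)}(|\xi|^2+|\xi|^4)(|\hat\r|^2+|\hat u|^2)\,d\xi\big)$, while on $S(t)$ the extra factor $|\xi|^2\lm C_1(1+t)^{-1}$ yields $\int_{S(t)}(|\xi|^2+|\xi|^4)(|\hat\r|^2+|\hat u|^2)\,d\xi\lm C(1+t)^{-1}(\|\r\|_{L^2}^2+\|u\|_{L^2}^2)$, which by \eqref{decay} is $\lm C(1+t)^{-1-2\beta(p)}$. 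Hence $\frac{d}{dt}\mathcal{E}+\frac{C_1}{1+t}\mathcal{E}\lm C(1+t)^{-2-2\beta(p)}$, and integrating the factor $(1+t)^{C_1}$ from $T_1$ to $t$ with $C_1$ large enough gives $\mathcal{E}(t)\lm C(1+t)^{-2\beta(p)-1}$, that is $\|\nabla\r\|_{H^1}+\|\nabla u\|_{H^1}\lm C(1+t)^{-\frac34(\frac2p-1)-\frac12}$. The time-derivative bounds then follow directly from the equations: $\|\p_t\r\|_{L^2}\lm\|\nabla u\|_{L^2}+\|N_1\|_{L^2}$ and, using the lower bound \eqref{Lower}, $\|\p_tu\|_{L^2}\lm\|\nabla^2u\|_{L^2}+\|\nabla\r\|_{L^2}+\|N_2\|_{L^2}$, whose right-hand sides obey the same rate by the estimate just obtained together with \eqref{uniform-bound}.
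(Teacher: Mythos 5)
Your proposal is correct and follows essentially the same route as the paper's proof: the same perturbation form, the same Lyapunov functional $\|\nabla u\|_{H^1}^2+P'(1)\|\nabla\r\|_{H^1}^2+\ep\int\nabla u\cdot\nabla^2\r\,dx$ with the cross term generating the $\|\nabla^2\r\|_{L^2}^2$ dissipation, the same absorption of nonlinearities via the large-time smallness from \eqref{decay} combined with the uniform bounds \eqref{uniform-bound} (including the delicate $\|\nabla u\|_{L^\infty}\|\nabla^2\r\|_{L^2}^2$ term handled by interpolation retaining a dissipative factor), and the same Fourier-splitting closure in which part of the $\|\nabla^2\r\|_{L^2}^2$ dissipation acts as a damping term for the top-order density energy --- the paper phrases this through the inequalities \eqref{2511}--\eqref{253} rather than your pointwise frequency-density comparison, but the mechanism is identical. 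The only minor imprecision is attributing the restriction $t\lg T_1$ solely to the nonlinear absorption, whereas it also encodes the condition $C_1(1+t)^{-1}\lm 1$ needed for your high-frequency domination (the paper's $T_1=\max\{T_0,R-1\}$ covers both); this is harmless.
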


\begin{rema}
Compared with decay rate \eqref{decay}, the advantage of decay rate \eqref{decay1} not only implies
that the second order spatial derivative of solution tends to zero, but also shows that
the first and second order spatial derivatives of solution converge to zero at the $L^2-$rate
$(1+t)^{-\frac{3}{4}(\frac2p-1)-\frac{1}{2}}, p \in [1, 2)$.
The decay rate for the first order spatial derivative of solution is optimal in the sense that
it coincides with the decay rate of solution to the heat equation.
\end{rema}

\begin{rema}
By the Sobolev interpolation inequality, it is shown that
the solution $(\rho, u)$ converges to the constant equilibrium state $(1, 0)$
at the $L^q(2\le q \le 6)-$rate $(1+t)^{-\frac{3}{4}(\frac{2}{p}-1)-\frac{3q-6}{4q}}, p \in [1, 2)$.
\end{rema}

Finally, we investigate the lower bound of decay rate for the density and velocity.
In order to make the upper bound of decay rate the same as the lower one, we take the index
$p=1$ in Theorem \ref{Thm1} specially.
Our second result can be stated as follows:

\begin{theo}\label{Thm3}
Let $p=1$, and suppose all the assumptions of Theorem \ref{Thm1} hold on. Denote $m_0:=\rho_0u_0$, assume that the Fourier transform $\mathcal{F}(\r_0, m_0)=(\widehat{\r_0}, \widehat{m_0})$ satisfies $
|\widehat{\r_0}|\lg c_0$, $\widehat{m_0}=0,0 \lm |\xi| \ll 1$, with $c_0>0$ a constant.
Then, the global solution $(\r,u)$ obtained in Theorem \ref{Thm1} has the decay rates for large time $t$
\begin{gather}\label{decay31}
c_3(1+t)^{-\frac34}\lm\|u(t)\|_{L^2}\lm C_1 (1+t)^{-\frac34};\\
\label{decay32}
c_3(1+t)^{-\frac34}\lm\|\r(t)\|_{L^2}\lm C_1 (1+t)^{-\frac34}.
\end{gather}
Here $c_3$ and $C_1$ are constants independent of time.
\end{theo}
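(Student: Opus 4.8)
The plan is to treat the two inequalities in \eqref{decay31}--\eqref{decay32} separately. The upper bounds are immediate: specializing the decay estimate \eqref{decay} of Theorem \ref{Thm1} to $p=1$ gives $\beta(1)=\frac34$, whence $\|u(t)\|_{L^2}+\|\r(t)\|_{L^2}\lm \|u(t)\|_{H^1}+\|\r(t)\|_{H^1}\lm C_1(1+t)^{-3/4}$. The entire difficulty lies in the matching lower bounds, which I would obtain by comparing the nonlinear flow with its linearization and showing that the error is of strictly lower order for large time.

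First I would recast the system in the conservative variables $(\r,m)$ with $m=\rho u$. The continuity equation $\p_t\r+\dive m=0$ is already linear, and writing $u=m-\r u$ and $P(\rho)-P(1)=\gamma\r+O(\r^2)$ turns the momentum equation into $\p_t m-\mu\Delta m-(\mu+\lambda)\nabla\dive m+\gamma\nabla\r=\mathcal N$, where every term of $\mathcal N$ (namely $\dive(\rho u\otimes u)$, $\Delta(\r u)$, $\nabla\dive(\r u)$ and $\nabla(P(\rho)-P(1)-\gamma\r)$) is at least quadratic and carries at least one spatial derivative. Denoting by $\mathcal L$ the associated linear operator and by $e^{t\mathcal L}$ its semigroup, Duhamel's principle gives $(\r,m)(t)=e^{t\mathcal L}(\r_0,m_0)+\int_0^t e^{(t-s)\mathcal L}(0,\mathcal N(s))\,ds$.

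Next I would carry out the low-frequency spectral analysis of $\mathcal L$. Decomposing $\hat m$ into its longitudinal component $d=\tfrac{\xi}{|\xi|}\cdot\hat m$ and its transverse part, the pair $(\hat\r,d)$ solves a $2\times2$ ODE whose symbol has eigenvalues $\lambda_\pm(\xi)=-a|\xi|^2\pm i\sqrt\gamma\,|\xi|+O(|\xi|^3)$ for $|\xi|\ll1$, with $a=\tfrac{2\mu+\lambda}{2}$. Reading off the matrix exponential, the relevant Green's-function entries satisfy, to leading order, $\widehat{\r^{L}}\approx e^{-a|\xi|^2t}\cos(\sqrt\gamma\,|\xi|t)\,\widehat{\r_0}$ and $\widehat{d^{L}}\approx\sqrt\gamma\,e^{-a|\xi|^2t}\sin(\sqrt\gamma\,|\xi|t)\,\widehat{\r_0}$, where I have used the hypothesis $\widehat{m_0}=0$ on low frequencies. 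Since $|\widehat{\r_0}|\lg c_0$ there, integrating $|\widehat{\r^{L}}|^2$ and $|\widehat{d^{L}}|^2$ over $\{|\xi|\le r_0\}$ and rescaling $\xi=\eta/\sqrt t$ produces the main term $\tfrac12\,t^{-3/2}\int e^{-2a|\eta|^2}d\eta$, while the oscillatory contributions coming from $\cos^2,\sin^2$ reduce to integrals of the form $\int r^2 e^{-2ar^2}\cos(2\sqrt\gamma\, r\sqrt t)\,dr$, which are exponentially small in $t$. This yields $\|\r^{L}(t)\|_{L^2}\lg c\,t^{-3/4}$ and $\|d^{L}(t)\|_{L^2}\lg c\,t^{-3/4}$, and hence $\|m^{L}(t)\|_{L^2}\lg\|d^{L}(t)\|_{L^2}\lg c\,t^{-3/4}$.

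Finally I would show the nonlinear correction is negligible. Using the standard $L^1$--$L^2$ decay $\|e^{t\mathcal L}f\|_{L^2}\lm(1+t)^{-3/4}\|f\|_{L^1}+e^{-ct}\|f\|_{L^2}$ together with the fact that each term of $\mathcal N$ carries a spatial derivative that can be transferred to the kernel (gaining a factor $(1+t-s)^{-1/2}$ at low frequency), I would bound $\|\mathcal N(s)\|_{L^1}$ by products such as $\|u\|_{L^2}^2$ and $\|\nabla\r\|_{L^2}\|u\|_{L^2}$. Inserting the rates $\|(\r,u)\|_{H^1}\lm(1+s)^{-3/4}$ from Theorem \ref{Thm1} and the improved $\|\nabla(\r,u)\|_{H^1}\lm(1+s)^{-5/4}$ from Theorem \ref{Thm2} gives $\|\mathcal N(s)\|_{L^1}\lm(1+s)^{-3/2}$, so that the time convolution $\int_0^t(1+t-s)^{-5/4}(1+s)^{-3/2}\,ds$ is $O(t^{-5/4})$, the high-frequency part being absorbed by the exponential kernel and the fast $L^2$-decay of $\mathcal N$. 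Subtracting, $\|\r(t)\|_{L^2}\lg\|\r^{L}(t)\|_{L^2}-\|\r^{NL}(t)\|_{L^2}\lg c\,t^{-3/4}-Ct^{-5/4}\lg\tfrac{c}{2}t^{-3/4}$ for $t$ large, and likewise for $m$; since $\underline\rho\lm\rho\lm C$ by \eqref{Lower} and \eqref{uniform-bound} the norms $\|u\|_{L^2}$ and $\|m\|_{L^2}$ are comparable, giving $\|u(t)\|_{L^2}\lg c_3(1+t)^{-3/4}$. The main obstacle I anticipate is precisely this nonlinear step: the viscous corrections $\Delta(\r u)$ and $\nabla\dive(\r u)$ carry two derivatives, so controlling their semigroup evolution near $s=t$ forces me to distribute the derivatives carefully between kernel and nonlinearity and to rely essentially on the first-order derivative decay of Theorem \ref{Thm2}; without that improvement the nonlinear term would only be $O(t^{-3/4})$ and could not be absorbed into the linear lower bound.
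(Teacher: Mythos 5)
Your proposal is correct, and its skeleton coincides with the paper's: pass to the conservative variables $(\r,m)$ with the quadratic forcing in divergence form, split the solution into the linear semigroup flow plus a Duhamel correction, transfer the divergence onto the low-frequency kernel to gain the rate $(1+t-\tau)^{-\frac54}$, show the correction is $O\big((1+t)^{-\frac54}\big)$, subtract it from the linear lower bound, and recover $u$ from $m$ (your observation that $\|m\|_{L^2}\lm\|\rho\|_{L^\infty}\|u\|_{L^2}$ makes the two norms comparable is in fact a little cleaner than the paper's estimate $\|m\|_{L^2}\lm\|u\|_{L^2}+C(1+t)^{-\frac32}$). Two points of execution genuinely differ. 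First, where you derive the linear lower bound by diagonalizing the symbol (eigenvalues $-\frac{2\mu+\lambda}{2}|\xi|^2\pm i\sqrt{\gamma}\,|\xi|+O(|\xi|^3)$, Gaussian rescaling, exponentially small oscillatory cross terms), the paper simply quotes this as Proposition \ref{decay-linear} from \cite{Hu-Wu,li-zhang1}; your computation is precisely the content of that cited result, so you have made the argument self-contained at the cost of length. Second, and more substantively, your handling of the nonlinear correction avoids the paper's time-weighted dissipation integral: in the paper, the bound \eqref{02} for $\|\nabla F\|_{L^2}$ retains factors $\|\nabla^2u\|_{H^1}$ — hence third derivatives of $u$, for which no pointwise decay is available — and this forces the auxiliary estimate $\int_0^t(1+\tau)^2(\|\nabla^2u\|^2_{H^1}+\|\nabla^2\r\|^2_{L^2})\,d\tau\lm C$ (estimate \eqref{113}, extracted from the energy inequality \eqref{1111}) combined with H\"older in time inside the Duhamel integral. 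Your plan instead distributes H\"older exponents so that at most second-order derivatives appear (e.g. $\|\r\|_{L^\infty}\|\nabla^2u\|_{L^2}$ in place of $\|\r\|_{L^6}\|\nabla^2u\|_{L^3}$), after which the pointwise rates of Theorem \ref{Thm2} with $p=1$, namely $\|\nabla(\r,u)\|_{H^1}\lm C(1+t)^{-\frac54}$ for $t\lg T_1$, do close the estimate, the interval $[0,T_1]$ being absorbed by the uniform bound \eqref{uniform-bound}; I checked that every term of $\nabla F$ admits such a pairing, so this shortcut is sound. What each approach buys: yours is marginally more elementary (no weighted-in-time energy argument), while the paper's \eqref{113} is the more robust device when pointwise derivative decay is unavailable or suboptimal. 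Your closing diagnosis is also accurate — without Theorem \ref{Thm2} the high-frequency part of the Duhamel term cannot be pushed below $(1+t)^{-\frac34}$ and the subtraction argument collapses, which is exactly why the paper proves Theorem \ref{Thm2} first.
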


\begin{rema}
The decay rates \eqref{decay31} and \eqref{decay32} imply that
the solution itself converges to
the constant equilibrium state $(1, 0)$ at the $L^2-$ rate
$(1+t)^{-\frac{3}{4}}$.
In other words, these decay rates obtained in \eqref{decay31} and \eqref{decay32}
are optimal, although the initial data for the CNS equations \eqref{cns} is large.
\end{rema}

Now we comment on the analysis in this paper.
First of all, we hope to establish the decay rate for the first and second order spatial derivatives of solution
for the compressible Navier-Stokes equations \eqref{cns} with large initial data.
Since the solution itself and its first order spatial derivative admit the same
$L^2-$rate $(1+t)^{-\frac34(\frac{2}{p}-1)}$, these quantities can be small enough essentially if the time is large.
Thus, we will take the strategy of the frame of small initial data(cf.\cite{nishida})
to establish the energy estimate:
\begin{equation}\label{energy1}
\frac{d}{dt}\mathcal{E}^2_1(t)+c_* (\|\nabla^2 u\|_{H^1}^2+\|\nabla^2 \r\|_{L^2}^2)
\lesssim Q(t) (\|\nabla^2 u\|_{H^1}^2+\|\nabla^2 \r\|_{L^2}^2)
+\|\nabla u\|_{L^{\infty}}\|\nabla^2\r\|^2_{L^2},
\end{equation}
where the energy norm $\mathcal{E}^2_1(t)$ is equivalent to $\|\nabla (\r, u)\|_{H^1}^2$,
and $Q(t)$ consists of some difficult terms, such as $\|\r\|_{L^\infty}$ and $\|\nabla(\r, u)\|_{L^3}$.
It is worth nothing that one can apply the Sobolev interpolation inequality to control these quantities
by the product of solution itself and the second order spatial derivative.
Since the latter one is uniform bounded with respect to time, $Q(t)$ is a small quantity
which appears as a prefactor in front of dissipation term $(\|\nabla^2 u\|_{H^1}^2+\|\nabla^2 \r\|_{L^2}^2)$,
which can be absorbed into the second term on the left hand side of inequality \eqref{energy1}.
On the other hand, the term $\|\nabla u\|_{L^{\infty}}$ can be controlled by product of
the first order spatial derivative of velocity
and dissipation term, see \eqref{231}.
Thus, the terms on the right hand side of \eqref{energy1} can be absorbed into
the second term on the left hand side of inequality \eqref{energy1}.

Secondly, we hope to perform the upper decay rate \eqref{decay1} by using the energy inequality
\eqref{energy1} and the Fourier splitting method by Schonbek \cite{schonbek1}.
Compared with incompressible flows(cf.\cite{{schonbek2},{Schonbek-Wiegner}}),
the dissipation of density is weaker than the one of velocity for the compressible Navier-Stokes equations.
To overcome this difficulty, our method here is to weaken the coefficient of velocity dissipation;
and hence, one part of the dissipation of density will play a role of damping term.
Thus, the application of Fourier splitting method helps us to obtain the decay rate \eqref{decay1},
see Lemma \ref{lemma25}.

Finally, we study the lower bound of decay rate for global solution of compressible Navier-Stokes
equations associated with large initial data for the case of $p=1$.
Since the decay rate \eqref{decay1} implies that these quantities will be small enough essentially
when the time is large.
It is worth nothing that the lower bound of decay rate for the linearized part
has been obtained in \cite{{Hu-Wu},{li-zhang1}} associated with large initial data.
Thus, let $U$  and $U_l$ be the solutions of nonlinear and linearized problem respectively.
Define the difference $U_\delta:= U-U_l$, it holds on
$
\|U\|_{L^2} \ge  \|U_l\|_{L^2}-\|U_\delta\|_{L^2}.
$
If the solutions $U_l$ and $U_\delta$ obey the assumptions:
$
\|U_l\|_{L^2} \ge C_{l} (1+t)^{-\alpha},
\quad \|U_\delta\|_{L^2} \le C_{\delta} (1+t)^{-\alpha}.
$
If $C_{\delta}$ is a small constant, then we have
$
\|U\|_{L^2} \ge \frac{1}{2}{C_{l}}(1+t)^{-\alpha}.
$
Indeed, the constant $C_{\delta}$ in our analysis depends on the quantity $\|(\r, u)(t)\|_{H^1}$,
which is small enough when the time is large.
All these lower and upper bounds of decay rates \eqref{decay1}, \eqref{decay31} and \eqref{decay32}
will be established in Section \ref{Main Theorems}.

\section{Proof of Main Theorems}\label{Main Theorems}

\quad
In this section, we will give the proof for the main Theorems \ref{Thm2} and \ref{Thm3}.
In subsection \ref{upper-decay}, we will show not only the second order spatial derivative of solution
tends to zero, but also the first and the second order spatial derivatives of solution converge to zero at
the $L^2-$rate $(1+t)^{-\frac{3}{4}(\frac2p-1)-\frac{1}{2}}$ with $p \in [1, 2)$.
In subsection \ref{lower-bound-decay}, one investigates the lower bound of decay rate for the solution
$(\r, u)$. This will show that the decay rate obtained in
Theorem \ref{Thm1} is optimal essentially for the case $p=1$.

\subsection{Upper Bound of Decay Rate}\label{upper-decay}
\quad
In this subsection, the content of our analysis is to give the proof for the Theorem \ref{Thm2}.
The analysis proceeds in several steps, which we will now detail.
Denoting $\r:=\rho-1$, we rewrite \eqref{cns} in the perturbation form as follows
\begin{equation}\label{cns1}
  	\left\{\begin{aligned}
  &\partial_t\varrho+\dive u=S_1,\\
   &\p_tu-\mu\Delta u-(\mu+\lambda)\nabla\dive u+P'(1)\nabla\r=S_2,\\
  	\end{aligned}\right.
  \end{equation}
  where the nonlinear terms $S_1$ and $S_2$ are defined by
  \begin{equation*}
  \left\{
  \begin{aligned}
  &S_1:=-\varrho\dive u-u\cdot\nabla\varrho,\\
  &S_2:=-u\cdot \nabla u-\frac{\r}{\r+1}[\mu\Delta u+(\mu+\lambda)\nabla\dive u]
                        -[\frac{P'(1+\r)}{1+\r}-\frac{P'(1)}{1}]\nabla\r.
  \end{aligned}
  \right.
 \end{equation*}

The first estimate in our scheme is to perform the estimate for the
first order spatial derivative of density and velocity as follows.

 \begin{lemm}\label{lemm1}
 Under the assumptions of Theorem \ref{Thm1}, the global solution $(\r, u)$ of Cauchy problem \eqref{cns1}
 has the estimate
 \begin{equation}\label{estimate1}
\begin{split}
&\frac{d}{dt}\int(\frac12|\nabla u|^2+\frac{P'(1)}{2}|\nabla\r|^2)dx+\mu\int|\nabla^2 u|^2dx+(\mu+\lambda)\int|\nabla\dive u|^2dx\\
&\lm C(\|\r\|^{\frac14}_{L^2}+\|\r \|_{H^1}+\|u \|_{H^1})
         (\|\nabla^2 u\|_{L^2}^2+\|\nabla^2 \r\|_{L^2}^2).
\end{split}
 \end{equation}
 Here $C$ is a constant independent of time.
 \end{lemm}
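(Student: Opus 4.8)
The plan is to turn \eqref{estimate1} into a first-order energy identity and then bury every nonlinear contribution inside the velocity dissipation. First I would apply $\nabla$ to both equations of \eqref{cns1}, test the differentiated momentum equation against $\nabla u$ and the differentiated continuity equation against $P'(1)\nabla\r$, and integrate over $\R^3$. After integrating by parts, the two viscous operators yield exactly $\mu\int|\nabla^2 u|^2\,dx+(\mu+\lambda)\int|\nabla\dive u|^2\,dx$ (using the whole-space identity $\int|\Delta u|^2\,dx=\int|\nabla^2 u|^2\,dx$), while the two linear coupling terms — the pressure term $P'(1)\nabla\r$ in the momentum equation and the term $\nabla\dive u$ in the continuity equation — contribute $-P'(1)\int\nabla\r\cdot\nabla\dive u\,dx$ and $+P'(1)\int\nabla\dive u\cdot\nabla\r\,dx$ respectively, and cancel identically. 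This leaves the identity
\[
\frac{d}{dt}\int\Big(\tfrac12|\nabla u|^2+\tfrac{P'(1)}2|\nabla\r|^2\Big)\,dx+\mu\int|\nabla^2u|^2\,dx+(\mu+\lambda)\int|\nabla\dive u|^2\,dx=\int\nabla S_2\!:\!\nabla u\,dx+P'(1)\int\nabla S_1\cdot\nabla\r\,dx,
\]
so the whole problem reduces to estimating the two nonlinear integrals on the right.

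Next, writing $\int\nabla S_2\!:\!\nabla u=-\int S_2\cdot\Delta u$ and expanding $S_1,S_2$, I would bound each resulting term by a low-order norm of $(\r,u)$ times the second-order quantities $\|\nabla^2u\|_{L^2}$ and $\|\nabla^2\r\|_{L^2}$, using Hölder's inequality together with $H^1\hookrightarrow L^6$ and the Gagliardo–Nirenberg interpolations $\|\nabla f\|_{L^3}\lesssim\|\nabla f\|_{L^2}^{1/2}\|\nabla^2f\|_{L^2}^{1/2}$ and $\|f\|_{L^\infty}\lesssim\|f\|_{L^2}^{1/4}\|\nabla^2f\|_{L^2}^{3/4}$. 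The cleanest term is $\int\frac{\r}{1+\r}\big[\mu\Delta u+(\mu+\lambda)\nabla\dive u\big]\cdot\Delta u$, which by the lower bound \eqref{Lower} is controlled by $C\|\r\|_{L^\infty}\|\nabla^2u\|_{L^2}^2\lesssim\|\r\|_{L^2}^{1/4}\|\nabla^2\r\|_{L^2}^{3/4}\|\nabla^2u\|_{L^2}^2$; here the surplus factor $\|\nabla^2\r\|_{L^2}^{3/4}$ is swallowed by the uniform-in-time bound \eqref{uniform-bound}, and this is precisely the origin of the prefactor $\|\r\|_{L^2}^{1/4}$ in \eqref{estimate1}. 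The pressure nonlinearity is first linearised via $\big|\frac{P'(1+\r)}{1+\r}-P'(1)\big|\lesssim|\r|$ (Taylor expansion, \eqref{Lower}, and smoothness of $P$), after which it reduces to a cubic expression carrying the prefactor $\|\r\|_{H^1}$.

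The delicate point — and what I expect to be the main obstacle — is that the continuity equation carries no parabolic smoothing, so the left-hand side of \eqref{estimate1} contains no density dissipation $\|\nabla^2\r\|_{L^2}^2$; this is exactly why such a term is permitted to appear on the right with a small prefactor. Consequently the transport-type pieces of $P'(1)\int\nabla S_1\cdot\nabla\r$ (typically $\int\dive u\,|\nabla\r|^2$ and $\int\r\,\nabla\dive u\cdot\nabla\r$) and the convective contribution $\int\nabla(u\cdot\nabla u)\!:\!\nabla u\lesssim\|\nabla u\|_{L^3}^3$ are the least straightforward to cast into the pure product form, since after interpolation the second-order norms enter to powers other than two. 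The strategy is to let the low-order factors $\|\nabla u\|_{L^2},\|\nabla\r\|_{L^2}$ supply the decaying prefactors $\|u\|_{H^1},\|\r\|_{H^1}$, and then to rely on the uniform-in-time bound \eqref{uniform-bound} to absorb the surplus powers of $\|\nabla^2u\|_{L^2},\|\nabla^2\r\|_{L^2}$ into the constant — peeling off a time-uniform factor when the exponent exceeds two, and using Young's inequality to send a small multiple of $\|\nabla^2u\|_{L^2}^2$ to the left when it falls below two. Organising all the nonlinearities so that each lands in the form $(\|\r\|_{L^2}^{1/4}+\|\r\|_{H^1}+\|u\|_{H^1})(\|\nabla^2u\|_{L^2}^2+\|\nabla^2\r\|_{L^2}^2)$, with the right decaying prefactor and no stray low-order remainder, is the technical heart of the proof, and the convective and density-gradient terms are where I expect to spend the most effort.
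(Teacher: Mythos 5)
Your treatment of the momentum equation coincides with the paper's: same differentiated energy identity, same cancellation of the two coupling terms, the reduction $\int\nabla S_2\!:\!\nabla u\,dx=-\int S_2\cdot\Delta u\,dx\lm\|S_2\|_{L^2}\|\nabla^2u\|_{L^2}$, and the same origin of the prefactor $\|\varrho\|^{1/4}_{L^2}$ via $\|\varrho\|_{L^\infty}\lesssim\|\varrho\|^{1/4}_{L^2}\|\nabla^2\varrho\|^{3/4}_{L^2}$ together with the uniform bound \eqref{uniform-bound}. The gap is on the density side. You expand $\nabla S_1$ by the product rule and propose to handle $\int\dive u\,|\nabla\varrho|^2dx$ and its siblings by interpolation, peeling off time-uniform factors when the dissipation exponent exceeds two and Young-absorbing into $\mu\|\nabla^2u\|^2_{L^2}$ when it falls below two. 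This mechanism fails precisely at the terms you flag: any H\"older/Gagliardo--Nirenberg splitting of $\int\dive u\,|\nabla\varrho|^2dx$ or $\int(\nabla u\cdot\nabla\varrho)\cdot\nabla\varrho\,dx$ leaves the second-order norms with total power strictly below two --- e.g. $\|\nabla u\|_{L^2}\|\nabla\varrho\|^{1/2}_{L^2}\|\nabla^2\varrho\|^{3/2}_{L^2}$, or $\|u\|^{1/4}_{L^2}\|\nabla\varrho\|_{L^2}\|\nabla^2\varrho\|_{L^2}$ --- because these cubic terms carry only three derivatives in total while the target $\|\nabla^2(\varrho,u)\|^2_{L^2}$ carries four, and H\"older allows at most one factor to be upgraded through $\dot H^1\hookrightarrow L^6$. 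A sub-quadratic power of $\|\nabla^2\varrho\|_{L^2}$ can neither be absorbed to the left (the left side of \eqref{estimate1} contains no density dissipation, as you yourself observe) nor raised by the uniform bound (peeling only lowers exponents); Young's inequality then inevitably produces a stray low-order remainder such as $C\|\nabla u\|_{L^2}\|\nabla\varrho\|^{1/2}_{L^2}$ carrying no dissipation factor, which is not dominated by the right-hand side of \eqref{estimate1}.

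The missing idea is one further integration by parts \emph{before} estimating: the paper bounds
\begin{equation*}
P'(1)\int\nabla S_1\cdot\nabla\varrho\,dx=-P'(1)\int S_1\,\Delta\varrho\,dx\lm C\|S_1\|_{L^2}\|\nabla^2\varrho\|_{L^2},
\end{equation*}
together with $\|S_1\|_{L^2}\lm\|\varrho\|_{L^3}\|\dive u\|_{L^6}+\|u\|_{L^3}\|\nabla\varrho\|_{L^6}\lm C(\|\varrho\|_{H^1}+\|u\|_{H^1})(\|\nabla^2u\|_{L^2}+\|\nabla^2\varrho\|_{L^2})$, so the dissipation enters exactly quadratically and no absorption is needed anywhere. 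Equivalently, inside your expansion each bad term is repaired by moving the derivative off $u$, e.g.
\begin{equation*}
\int\dive u\,|\nabla\varrho|^2dx=-2\int u\cdot(\nabla^2\varrho\cdot\nabla\varrho)\,dx\lm 2\|u\|_{L^3}\|\nabla^2\varrho\|_{L^2}\|\nabla\varrho\|_{L^6}\lm C\|u\|_{H^1}\|\nabla^2\varrho\|^2_{L^2}.
\end{equation*}
With this correction your argument reproduces the lemma; note also that the paper performs no Young absorption into $\mu\|\nabla^2u\|^2_{L^2}$, which is why the coefficient $\mu$ on the left of \eqref{estimate1} remains intact, whereas your absorption step would degrade it.
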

 \begin{proof}
  First, applying $\nabla$ operator to the second equation of \eqref{cns1}, we have
  \begin{equation}\label{215}
  \p_t(\nabla u)-\mu\Delta\nabla u-(\mu+\lambda)\nabla^2\dive u+P'(1)\nabla^2\r=\nabla S_2.
  \end{equation}
  Multiplying equation \eqref{215} by $\nabla u$ and integrating over $\R^3$, we get
  \begin{equation*}
  \frac{d}{dt}\frac12\int|\nabla u|^2dx+\mu\int|\nabla^2u|^2dx+(\mu+\lambda)\int|\nabla\dive u|^2dx+P'(1)\int\nabla^2\r\cdot\nabla udx=\int\nabla S_2\cdot\nabla udx,
  \end{equation*}
  which, integrating by part and applying H\"{o}lder inequality, yields directly
  \begin{equation*}
  \frac{d}{dt}\frac12\int|\nabla u|^2dx+\mu\int|\nabla^2u|^2dx+(\mu+\lambda)\int|\nabla\dive u|^2dx
  +P'(1)\int\nabla^2\r\cdot\nabla udx \lm \|S_2\|_{L^2}\|\nabla^2u\|_{L^2}.
 \end{equation*}
Applying the H\"{o}lder and Sobolev inequalities, we show that
\begin{equation}\label{216}
\|u\cdot \nabla u\|_{L^2}\le \|u\|_{L^3}\|\nabla u\|_{L^6}
\le C\|u\|_{H^1}\|\nabla^2 u\|_{L^2}.
\end{equation}
Using the lower bound of density \eqref{Lower}, it holds on
\begin{equation*}
\|\frac{\r}{\r+1}[\mu\Delta u+(\mu+\lambda)\nabla\dive u]\|_{L^2}
\le C\|\r\|_{L^\infty}\|\nabla^2 u\|_{L^2}.
\end{equation*}
The combination of Sobolev inequality and uniform estimate \eqref{uniform-bound} yields directly
\begin{equation}\label{4}
\|\r\|_{L^{\infty}}
\lm C\|\r\|^{\frac14}_{L^2}\|\nabla^2\r\|^{\frac34}_{L^2}
\lm C\|\r\|^{\frac14}_{L^2},
\end{equation}
and hence, it holds on
\begin{equation}\label{211}
\|\frac{\r}{\r+1}[\mu\Delta u+(\mu+\lambda)\nabla\dive u]\|_{L^2}
\le C\|\r\|^{\frac14}_{L^2}\|\nabla^2 u\|_{L^2}.
\end{equation}
Using the Taylor expression, H\"{o}lder and Sobolev inequalities, we have
\begin{equation}\label{212}
\|[\frac{P'(1+\r)}{1+\r}-\frac{P'(1)}{1}]\nabla\r\|_{L^2}
\le C\|\r\|_{L^3}\|\nabla\r\|_{L^6}
\le C\|\r\|_{H^1}\|\nabla^2 \r\|_{L^2}.
\end{equation}
Hence, the combination of estimates \eqref{216}, \eqref{211} and \eqref{212} implies directly
\begin{equation*}
\|S_2\|_{L^2}\|\nabla^2u\|_{L^2}
\leqslant C\|\r\|^{\frac14}_{L^2}\|\nabla^2 u\|^2_{L^2}
          +C(\|\r\|_{H^1}+\|u\|_{H^1})(\|\nabla^2\r\|^2_{L^2}+\|\nabla^2u\|^2_{L^2}),
\end{equation*}
which implies
\begin{equation}\label{214}
\begin{aligned}
&\frac{d}{dt}\frac12\int|\nabla u|^2dx+\mu\int|\nabla^2u|^2dx
  +(\mu+\lambda)\int|\nabla\dive u|^2dx
  +P'(1)\int\nabla^2\r\cdot\nabla udx\\
&\lm C(\|\r\|^{\frac14}_{L^2}+\|\r\|_{H^1}+\|u\|_{H^1})(\|\nabla^2\r\|^2_{L^2}+\|\nabla^2u\|^2_{L^2}).
\end{aligned}
\end{equation}

 Second, applying $\nabla$ operator to the first equation of \eqref{cns1}, we have
 \begin{equation*}
 \p_t(\nabla\r)+\nabla\dive u=\nabla S_1.
 \end{equation*}
 Multiplying the above equality by $P'(1)\nabla\r$ and integrating over $\R^3$, it follows that
 \begin{equation*}
 \frac{d}{dt}\frac{P'(1)}{2}\int|\nabla\r|^2dx+P'(1)\int\nabla\r\cdot\nabla\dive udx
 =P'(1)\int\nabla S_1\cdot\nabla \r dx,
 \end{equation*}
 which, integrating by parts and applying H\"{o}lder inequality, yields directly
  \begin{equation}\label{density}
  \frac{d}{dt}\frac{P'(1)}{2}\int|\nabla\r|^2dx-P'(1)\int\nabla^2\r\cdot\nabla udx
   \lm C\|S_1\|_{L^2}\|\nabla^2\r\|_{L^2}.
  \end{equation}
 Using H\"{o}lder and Sobolev inequalities, one may check that
 \begin{equation}\label{5}
 \|S_1\|_{L^2}\le \|\r \|_{L^3}\|{\rm div} u\|_{L^6}+\|u\|_{L^3}\|\nabla \r\|_{L^6}
 \le C(\|\r \|_{H^1}+\|u \|_{H^1})(\|\nabla^2 u\|_{L^2}+\|\nabla^2 \r\|_{L^2}).
 \end{equation}
 This and the inequality \eqref{density} give directly
 \begin{equation}\label{213}
  \frac{d}{dt}\frac{P'(1)}{2}\int|\nabla\r|^2dx-P'(1)\int\nabla^2\r\cdot\nabla udx
   \lm C(\|\r \|_{H^1}+\|u \|_{H^1})(\|\nabla^2 u\|_{L^2}^2+\|\nabla^2 \r\|_{L^2}^2).
  \end{equation}
Combining the estimates \eqref{214} and \eqref{213}, we deduce
   \begin{equation*}
   \begin{split}
   &\frac{d}{dt}\int(\frac12|\nabla u|^2+\frac{P'(1)}{2}|\nabla\r|^2)dx
   +\mu\int|\nabla^2 u|^2dx+(\mu+\lambda)\int|\nabla\dive u|^2dx\\
   &\lm C(\|\r\|^{\frac14}_{L^2}+\|\r \|_{H^1}+\|u \|_{H^1})
         (\|\nabla^2 u\|_{L^2}^2+\|\nabla^2 \r\|_{L^2}^2).
   \end{split}
   \end{equation*}
Therefore, we conclude the proof of this lemma.
   \end{proof}

The content of the next step is to establish the energy estimate for the second order spatial derivative of solution,
which can help us to achieve the decay rate for them.

\begin{lemm}\label{lemm2}
Under the assumptions of Theorem \ref{Thm1}, the global solution $(\r, u)$ of Cauchy problem \eqref{cns1}
 has the estimate
\begin{equation}\label{estimate2}
\begin{split}
&\frac{d}{dt}\int(\frac12|\nabla^2u|^2+\frac{P'(1)}{2}|\nabla^2\r|^2)dx
 +\mu\int|\nabla^3u|^2dx+(\mu+\lambda)\int|\nabla^2\dive u|^2dx\\
&\lm  C(\|u\|_{H^1}+\|\r\|^{\frac14}_{L^2}+\|u\|^{\frac14}_{L^2}
        +\|\nabla u\|^{\frac14}_{L^2})(\|\nabla^2 u\|^2_{H^1}+\|\nabla^2\r\|^2_{L^2}),
\end{split}
\end{equation}
where $C$ is a constant independent of time.
\end{lemm}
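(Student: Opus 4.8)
The plan is to mimic the proof of Lemma \ref{lemm1} one derivative higher. First I would apply $\nabla^2$ to both equations of \eqref{cns1}, obtaining
\[
\p_t(\nabla^2 u)-\mu\Delta\nabla^2 u-(\mu+\lambda)\nabla^2\nabla\dive u+P'(1)\nabla^3\r=\nabla^2 S_2
\]
together with $\p_t(\nabla^2\r)+\nabla^2\dive u=\nabla^2 S_1$. Pairing the first with $\nabla^2 u$ and the second with $P'(1)\nabla^2\r$ in $L^2(\R^3)$ and integrating by parts in the diffusion terms, I recover exactly the left-hand side of \eqref{estimate2} together with two coupling terms $P'(1)\int\nabla^3\r\cdot\nabla^2 u\,dx$ and $P'(1)\int\nabla^2\dive u\cdot\nabla^2\r\,dx$. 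The first step is to verify that these cancel: integrating by parts once in the former moves the extra derivative off $\r$ and onto $u$, turning it into $-P'(1)\int\nabla^2\r\cdot\nabla^2\dive u\,dx$, which is the exact negative of the latter. This reproduces at second order the antisymmetric coupling already used in Lemma \ref{lemm1}, and leaves only the nonlinear contributions $-\int\nabla S_2\cdot\nabla^3 u\,dx$ and $\int\nabla^2 S_1\cdot\nabla^2\r\,dx$ to estimate.

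For the velocity nonlinearity I would bound $\big|\int\nabla S_2\cdot\nabla^3 u\,dx\big|\lm\|\nabla S_2\|_{L^2}\|\nabla^3 u\|_{L^2}$ and estimate $\|\nabla S_2\|_{L^2}$ term by term, exactly as in \eqref{216}--\eqref{212} but carrying one extra derivative. The convective part $\nabla(u\cdot\nabla u)$ splits into $\nabla u\otimes\nabla u$ and $u\cdot\nabla^2 u$; placing $u$ in $L^6\hookrightarrow H^1$ and the derivative factors in the appropriate $L^3,L^6$ spaces and interpolating by Gagliardo--Nirenberg, these yield the prefactors $\|u\|_{H^1}$ and $\|\nabla u\|^{\frac14}_{L^2}$ in front of $\|\nabla^2 u\|^2_{H^1}$. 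The viscous part $\nabla\big(\tfrac{\r}{\r+1}[\mu\Delta u+(\mu+\lambda)\nabla\dive u]\big)$ produces $\tfrac{\r}{\r+1}\nabla^3 u$ and $\nabla\big(\tfrac{\r}{\r+1}\big)\nabla^2 u$; using the lower bound \eqref{Lower} together with \eqref{4} to control $\|\r\|_{L^\infty}\lm C\|\r\|^{\frac14}_{L^2}$ gives the $\|\r\|^{\frac14}_{L^2}$ prefactor, and the pressure part $\nabla\big([\tfrac{P'(1+\r)}{1+\r}-P'(1)]\nabla\r\big)$ is treated the same way. Throughout, the uniform-in-time bound \eqref{uniform-bound} lets me absorb the leftover $\|\nabla^2(\r,u)\|_{L^2}$ factors into constants, so that every piece ends up as (small prefactor)$\times(\|\nabla^2 u\|^2_{H^1}+\|\nabla^2\r\|^2_{L^2})$.

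The density nonlinearity $\int\nabla^2 S_1\cdot\nabla^2\r\,dx$ with $S_1=-\r\dive u-u\cdot\nabla\r$ is the delicate point, and I expect it to be the main obstacle. Expanding $\nabla^2(u\cdot\nabla\r)$ by the Leibniz rule produces the top-order term $u\cdot\nabla(\nabla^2\r)$ (and its companion $\nabla u\cdot\nabla^2\r$), whose pairing with $\nabla^2\r$ contains $\int(u\cdot\nabla)\tfrac12|\nabla^2\r|^2\,dx$. Since the left-hand side of \eqref{estimate2} carries no $\|\nabla^3\r\|_{L^2}$ dissipation --- this is precisely the asymmetry between the density and velocity dissipations highlighted in the introduction --- one cannot simply place $\nabla^3\r$ in $L^2$. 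Instead I would integrate by parts to get $-\tfrac12\int(\dive u)|\nabla^2\r|^2\,dx\lm C\|\nabla u\|_{L^\infty}\|\nabla^2\r\|^2_{L^2}$, and then control $\|\nabla u\|_{L^\infty}\lm C\|\nabla u\|^{\frac14}_{L^2}\|\nabla^3 u\|^{\frac34}_{L^2}$ by Gagliardo--Nirenberg; a Young inequality together with the uniform bound on $\|\nabla^2\r\|_{L^2}$ then produces exactly the $\|\nabla u\|^{\frac14}_{L^2}(\|\nabla^2 u\|^2_{H^1}+\|\nabla^2\r\|^2_{L^2})$ contribution. The remaining genuinely lower-order pieces of $\nabla^2(\r\dive u)$ and $\nabla^2(u\cdot\nabla\r)$, in which at most two derivatives fall on $\r$, are handled by H\"older, Sobolev and \eqref{4} to generate the $\|\r\|^{\frac14}_{L^2}$ and $\|u\|_{H^1}$ prefactors. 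Collecting all contributions gives \eqref{estimate2}.
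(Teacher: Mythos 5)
Your proposal reproduces the paper's proof essentially step for step: the same second-order energy identity with the antisymmetric cancellation of the coupling terms $P'(1)\int\nabla^3\r\cdot\nabla^2u\,dx$ and $P'(1)\int\nabla^2\dive u\cdot\nabla^2\r\,dx$, the same integration by parts reducing $\int\nabla^2S_2\cdot\nabla^2u\,dx$ to $-\int\nabla S_2\cdot\nabla^3u\,dx$ so that only $\|\nabla S_2\|_{L^2}$ is needed, the same Leibniz expansion of $\nabla^2S_1$ with the transport term handled via $-\frac12\int(\dive u)|\nabla^2\r|^2dx$, and the same Gagliardo--Nirenberg bound $\|\nabla u\|_{L^\infty}\lesssim\|\nabla u\|_{L^2}^{1/4}\|\nabla^3u\|_{L^2}^{3/4}$ combined with the uniform estimate \eqref{uniform-bound} to absorb the delicate term, exactly as in \eqref{231}. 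The only deviations are cosmetic choices in the H\"older pairings, so the argument is correct and takes the same route as the paper.
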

\begin{proof}
   First, applying $\nabla^2$ differential operator to the second equation of \eqref{cns1}, it holds on
   \begin{equation*}
   \p_t(\nabla^2u)-\mu\nabla^2\Delta u-(\mu+\lambda)\nabla^3\dive u+P'(1)\nabla^3\r=\nabla^2S_2.
   \end{equation*}
   Multiplying the above equality by $\nabla^2u$ and integrating over $\R^3$, we get
   \begin{equation*}\label{2orderu}
   \frac{d}{dt}\frac12\int|\nabla^2u|^2dx+\mu\int|\nabla^3u|^2dx+(\mu+\lambda)\int|\nabla^2\dive u|^2dx
   +P'(1)\int\nabla^3\r\cdot\nabla^2udx=\int\nabla^2 S_2\cdot\nabla^2udx.
   \end{equation*}
   Let us focus on the last term $\int\nabla^2 S_2\cdot\nabla^2udx$.
   The integration by part yields directly
   \begin{equation}\label{last}
   \int\nabla^2 S_2\cdot\nabla^2udx=-\int\nabla S_2\cdot\nabla^3udx.
   \end{equation}
   By routine checking, one may show that
   \begin{equation}\label{s2}
   \begin{split}
   \nabla S_2=
   &-\nabla u\cdot \nabla u-u\cdot \nabla(\nabla u)
     -\frac{\r}{1+\r}[\mu\nabla\Delta u+(\mu+\lambda)\nabla^2\dive u]\\
   & -[\frac{P'(1+\r)}{1+\r}-\frac{P'(1)}{1}]\nabla^2\r
     -\frac{\nabla\r}{(1+\r)^2}[\mu\Delta u+(\mu+\lambda)\nabla\dive u]\\
   &-\frac{P''(1+\r)(1+\r)-P'(1+\r)}{(1+\r)^2}\nabla\r\nabla\r.
   \end{split}
   \end{equation}
Observe that
\begin{equation*}
P'(1+\r)=\gamma(1+\r)^{\gamma-1}, \qquad P''(1+\r)=\gamma(\gamma-1)(1+\r)^{\gamma-2},
\end{equation*}
and hence, it holds on
\begin{equation}\label{pre}
\frac{P''(1+\r)(1+\r)-P'(1+\r)}{(1+\r)^2}=\gamma(\gamma-2)(1+\r)^{\gamma-3}, \qquad \frac{P'(1+\r)}{1+\r}=\gamma(1+\r)^{\gamma-2}.
\end{equation}
The combination of \eqref{s2} and \eqref{pre} yields directly
\begin{equation*}
\begin{aligned}
\|\nabla S_2 \|_{L^2}
\lm &C(\|\nabla u\|_{L^3}\|\nabla u\|_{L^6}
     +\|u\|_{L^3}\|\nabla^2 u\|_{L^6}
     +\|\r\|_{L^{\infty}}\|\nabla^3 u\|_{L^2})\\
     &+C(\|\r\|_{L^{\infty}}\|\nabla^2\r\|_{L^2}
     +\|\nabla\r\|_{L^3}\|\nabla^2u\|_{L^6}+\|\nabla\r\|_{L^3}\|\nabla\r\|_{L^6})\\
\lm &C(\|\r\|_{L^{\infty}}+\|u\|_{H^1}
       +\|\nabla\r\|_{L^3}+\|\nabla u\|_{L^3})(\|\nabla^2 u\|_{H^1}+\|\nabla^2\r\|_{L^2}).
\end{aligned}
\end{equation*}
By virtue of the Sobolev inequality and uniform estimate \eqref{uniform-bound}, it follows that
\begin{equation}\label{3}
\begin{aligned}
\|\r\|_{L^{\infty}}+\|\nabla\r\|_{L^3}+\|\nabla u\|_{L^3}
&\lm C(\|\r\|^{\frac14}_{L^2}\|\nabla^2\r\|^{\frac34}_{L^2}+\|\r\|^{\frac14}_{L^2}\|\nabla^2\r\|^{\frac34}_{L^2}
      +\|u\|^{\frac14}_{L^2}\|\nabla^2 u\|^{\frac34}_{L^2})\\
&\lm C(\|\r\|^{\frac14}_{L^2}+\|u\|^{\frac14}_{L^2}),
\end{aligned}
\end{equation}
and hence, we show that
\begin{equation}\label{221}
\|\nabla S_2 \|_{L^2}\le C(\|u\|_{H^1}+\|\r\|^{\frac14}_{L^2}+\|u\|^{\frac14}_{L^2})
                          (\|\nabla^2 u\|_{H^1}+\|\nabla^2\r\|_{L^2}).
\end{equation}
Thus, we conclude the following estimate
\begin{equation}\label{2order1}
\begin{split}
&\frac{d}{dt}\frac12\int|\nabla^2u|^2dx+\mu\int|\nabla^3u|^2dx+(\mu+\lambda)\int|\nabla^2\dive u|^2dx
 +P'(1)\int\nabla^3\r\cdot\nabla^2udx \\
&\lm C(\|u\|_{H^1}+\|\r\|^{\frac14}_{L^2}+\|u\|^{\frac14}_{L^2})(\|\nabla^2 u\|_{H^1}^2+\|\nabla^2\r\|_{L^2}^2).
\end{split}
\end{equation}

Applying $\nabla^2$ differential operator to the first equation of \eqref{cns1} implies
\begin{equation*}
\p_t(\nabla^2\r)+\nabla^2\dive u=\nabla^2S_1.
\end{equation*}
Multiplying the above equality by $P'(1)\nabla^2\r$ and integrating over $\R^3$, we obtain
\begin{equation}\label{2orderdensity}
\frac{d}{dt}\frac{P'(1)}{2}\int|\nabla^2\r|^2dx+P'(1)\int\nabla^2\dive u\cdot\nabla^2\r dx= P'(1)\int\nabla^2 S_1\cdot\nabla^2\r dx
\end{equation}
Recall that $S_1=-\r\dive u-u\cdot\nabla\r$, a straightforward computation shows that
\begin{equation*}
\nabla^2(\r\dive u)=\r\nabla^2\dive u+2\nabla\r\nabla\dive u+\nabla^2\r\dive u,
\end{equation*}
and hence, it follows that
\begin{equation}\label{1}
\begin{split}
&\|\nabla^2(\r\dive u)\|_{L^2}\|\nabla^2\r\|_{L^2}\\
&\lm (\|\r\|_{L^{\infty}}\|\nabla^2\dive u\|_{L^2}+\|\nabla\r\|_{L^3}\|\nabla\dive u\|_{L^6})\|\nabla^2\r\|_{L^2}
     +\|\dive u\|_{L^{\infty}}\|\nabla^2\r\|_{L^2}^2\\
&\lm C(\|\r\|_{L^{\infty}}+\|\nabla\r\|_{L^3})(\|\nabla^3u\|^2_{L^2}+\|\nabla^2\r\|^2_{L^2})+C\|\dive u\|_{L^{\infty}}\|\nabla^2\r\|^2_{L^2}.
\end{split}
\end{equation}
By routine checking, one may check that
\begin{equation*}
\nabla^2(u\cdot\nabla\r)=u\cdot\nabla(\nabla^2\r)+2\nabla u\cdot\nabla^2\r+\nabla^2u\cdot\nabla\r.
\end{equation*}
The integration by part yields directly
\begin{equation*}
\int u\cdot\nabla(\nabla^2\r)\cdot\nabla^2 \r dx
=\int u\cdot\nabla(\frac12|\nabla^2\r|^2)dx=-\frac12\int (\dive u)|\nabla^2\r|^2 dx,
\end{equation*}
and hence, we obtain
\begin{equation}\label{2}
\begin{split}
|\int\nabla^2(u\cdot\nabla\r)\cdot\nabla^2\r dx|
&\lm C(\|\nabla u\|_{L^{\infty}}\|\nabla^2\r\|_{L^2}+\|\nabla^2u\|_{L^6}\|\nabla\r\|_{L^3})\|\nabla^2\r\|_{L^2}\\
&\quad    +C\|\dive u\|_{L^{\infty}}\|\nabla^2\r\|^2_{L^2}\\
&\lm C\|\nabla u\|_{L^{\infty}}\|\nabla^2\r\|^2_{L^2}
     +C \|\nabla\r\|_{L^3} \|\nabla^3u\|_{L^2}\|\nabla^2\r\|_{L^2}.
\end{split}
\end{equation}
It follows from the estimates \eqref{1} and \eqref{2} that
\begin{equation*}
|\int\nabla^2 S_1\cdot\nabla^2\r dx |
\lm C(\|\nabla\r\|_{L^3}+\|\r\|_{L^{\infty}}+\|\nabla u\|_{L^{\infty}})\|\nabla^2 \r\|^2_{L^2}
    +C(\|\nabla\r\|_{L^3}+\|\r\|_{L^{\infty}})\|\nabla^3u\|^2_{L^2},
\end{equation*}
which, together with \eqref{3} and \eqref{2orderdensity}, implies directly
\begin{equation}\label{2oeder2}
\begin{split}
&\frac{d}{dt}\frac{P'(1)}{2}\int|\nabla^2\r|^2dx+P'(1)\int\nabla^2\dive u\cdot\nabla^2\r dx\\
&\lm C(\|\r\|^{\frac14}_{L^2}+\|\nabla u\|_{L^{\infty}})\|\nabla^2 \r\|^2_{L^2}
     +C\|\r\|^{\frac14}_{L^2}\|\nabla^3 u\|^2_{L^2}.
\end{split}
\end{equation}
The combination of \eqref{2order1} and \eqref{2oeder2} gives rise to
\begin{equation}\label{6}
\begin{split}
&\frac{d}{dt}[\int\frac12|\nabla^2u|^2dx+\frac{P'(1)}{2}\int|\nabla^2\r|^2dx]+\mu\int|\nabla^3u|^2dx+(\mu+\lambda)\int|\nabla^2\dive u|^2dx\\
&\lm C(\|u\|_{H^1}+\|\r\|^{\frac14}_{L^2}+\|u\|^{\frac14}_{L^2})
      (\|\nabla^2 u\|^2_{H^1}+\|\nabla^2\r\|^2_{L^2})
       +C\|\nabla u\|_{L^{\infty}}\|\nabla^2\r\|^2_{L^2}.
\end{split}
\end{equation}
It is worth nothing that $\|\nabla u\|_{L^{\infty}}\|\nabla^2\r\|^2_{L^2}$ is the delicate term,
which arises on the righthand side of inequality \eqref{6}.
Then, our method here is to control the prefactor $\|\nabla u\|_{L^{\infty}}$ in front of $\|\nabla^2\r\|^2_{L^2}$
by the product of energy term $\|\nabla u\|_{L^2}$ and dissipative term $\|\nabla^3 u\|_{L^2}$.
More precisely, one may show that
\begin{equation}\label{231}
\begin{aligned}
\|\nabla u\|_{L^{\infty}}\|\nabla^2\r\|^2_{L^2}
&\lm C\|\nabla u\|^{\frac14}_{L^2}\|\nabla^3u\|^{\frac34}_{L^2}
     \|\nabla^2\r\|^{\frac54}_{L^2}\|\nabla^2\r\|^{\frac34}_{L^2}\\
&\lm C\|\nabla u\|^{\frac14}_{L^2}(\|\nabla^3u\|^2_{L^2}+\|\nabla^2\r\|^2_{L^2}),
\end{aligned}
\end{equation}
where we have used the uniform estimate \eqref{uniform-bound} in the last inequality.
Then, substituting the estimate \eqref{231} into \eqref{6}, we conclude the proof of this lemma.
\end{proof}

In order to close the estimate, it is imperative to establish the dissipation estimate for $\nabla^2 \r$.
\begin{lemm}\label{lemm3}
Under the assumptions of Theorem \ref{Thm1}, the global solution $(\r, u)$ of Cauchy problem \eqref{cns1}
 has the estimate
\begin{equation}\label{estimate3}
\frac{d}{dt}\int\nabla u\cdot\nabla^2\r dx+\frac{7}{8}P'(1)\int|\nabla^2\r|^2 dx
\lm C\|\nabla^2u\|^2_{H^1}+C(\|\r\|^{\frac14}_{L^2}+\|\r \|_{H^1}+\|u \|_{H^1})\|\nabla^2\r\|^2_{L^2},
\end{equation}
where $C$ is a positive constant independent of time.
\end{lemm}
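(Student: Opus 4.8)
The plan is to exploit the coupling between the two equations in \eqref{cns1}: the density obeys only a transport-type equation and hence enjoys no direct parabolic smoothing, so the dissipation $\|\nabla^2\r\|_{L^2}^2$ has to be manufactured out of the pressure term $P'(1)\nabla\r$ that sits inside the velocity equation. Concretely, I would differentiate the mixed quantity $\int\nabla u\cdot\nabla^2\r\,dx$ in time and substitute both evolution equations. Using the first-order velocity identity \eqref{215} in the form $\p_t(\nabla u)=\mu\Delta\nabla u+(\mu+\lambda)\nabla^2\dive u-P'(1)\nabla^2\r+\nabla S_2$, together with the twice-differentiated continuity equation $\p_t(\nabla^2\r)=-\nabla^2\dive u+\nabla^2 S_1$, one finds
\begin{equation*}
\begin{split}
\frac{d}{dt}\int\nabla u\cdot\nabla^2\r\,dx
&=-P'(1)\int|\nabla^2\r|^2\,dx
  +\mu\int\Delta\nabla u\cdot\nabla^2\r\,dx
  +(\mu+\lambda)\int\nabla^2\dive u\cdot\nabla^2\r\,dx\\
&\quad-\int\nabla u\cdot\nabla^2\dive u\,dx
  +\int\nabla S_2\cdot\nabla^2\r\,dx
  +\int\nabla u\cdot\nabla^2 S_1\,dx.
\end{split}
\end{equation*}
The first term on the right is exactly the desired dissipation once it is moved to the left; the remaining five terms must be absorbed into the right-hand side of \eqref{estimate3}.

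Next I would dispose of the three linear coupling terms. Since $\Delta\nabla u$ and $\nabla^2\dive u$ are third-order derivatives of $u$, Cauchy--Schwarz and Young's inequality give $\mu\int\Delta\nabla u\cdot\nabla^2\r+(\mu+\lambda)\int\nabla^2\dive u\cdot\nabla^2\r\lm \ep\|\nabla^2\r\|_{L^2}^2+C_\ep\|\nabla^3 u\|_{L^2}^2$; choosing $\ep$ so small that $2\ep\lm\frac18 P'(1)$ lets me transfer this piece to the left and degrade the coefficient of $\|\nabla^2\r\|_{L^2}^2$ from $P'(1)$ to $\frac78 P'(1)$, which is precisely the constant appearing in \eqref{estimate3}. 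The purely velocity term $-\int\nabla u\cdot\nabla^2\dive u$ is handled by a single integration by parts, which rebalances the derivatives and bounds it by $C\|\nabla^2 u\|_{L^2}^2\lm C\|\nabla^2 u\|_{H^1}^2$. Thus the whole linear contribution is controlled by $C\|\nabla^2 u\|_{H^1}^2$ together with the $\ep$-term already absorbed on the left.

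For the nonlinear terms, the velocity nonlinearity is immediate from Lemma \ref{lemm2}: by Cauchy--Schwarz and the bound \eqref{221} for $\|\nabla S_2\|_{L^2}$, $\int\nabla S_2\cdot\nabla^2\r\lm \|\nabla S_2\|_{L^2}\|\nabla^2\r\|_{L^2}$, and Young's inequality splits this into a small-prefactor multiple of $\|\nabla^2\r\|_{L^2}^2$ and a term $C\|\nabla^2 u\|_{H^1}^2$, the density-type prefactors being uniformly bounded through \eqref{uniform-bound}. The density nonlinearity $\int\nabla u\cdot\nabla^2 S_1$ is the delicate one: after one integration by parts it becomes $-\int\nabla^2 u\cdot\nabla S_1$, and expanding $\nabla S_1$ from $S_1=-\r\dive u-u\cdot\nabla\r$ produces a transport contribution in which $\r$ threatens to carry three derivatives. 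Since $\|\nabla^3\r\|_{L^2}$ is \emph{not} a quantity we control, the remedy is to integrate by parts once more in the convective direction, transferring the extra derivative off $\r$ so that only $\|\nabla^2\r\|_{L^2}$ survives; combining this with the Gagliardo--Nirenberg estimates of \eqref{3} then gives again $C\|\nabla^2 u\|_{H^1}^2$ plus small-prefactor multiples of $\|\nabla^2\r\|_{L^2}^2$ with prefactors of the type $\|\r\|^{\frac14}_{L^2}+\|\r\|_{H^1}+\|u\|_{H^1}$.

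The hard part is precisely this last convective term, and it is the only place where the transport structure of the continuity equation really bites: because that equation provides no smoothing, the piece $\int\nabla u\,(u\cdot\nabla^3\r)$ has to be reorganized by integration by parts into expressions where $\r$ carries at most two derivatives, at the cost of a factor such as $\|u\|_{L^\infty}$, which is itself a small quantity controlled by the uniform bound \eqref{uniform-bound} via Gagliardo--Nirenberg. Once this reorganization is carried out, collecting the coefficient $\frac78 P'(1)$ in front of $\|\nabla^2\r\|_{L^2}^2$ on the left, dumping all velocity-dominated remainders (whose density prefactors are uniformly bounded) into $C\|\nabla^2 u\|_{H^1}^2$, and keeping only the genuinely small prefactors $\|\r\|^{\frac14}_{L^2},\|\r\|_{H^1},\|u\|_{H^1}$ in front of $\|\nabla^2\r\|_{L^2}^2$, yields exactly \eqref{estimate3}.
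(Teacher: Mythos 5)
Your proposal is correct and is essentially the paper's own proof: the paper multiplies \eqref{215} by $\nabla^2\r$ and rewrites $\int\p_t(\nabla u)\cdot\nabla^2\r\,dx$ via one spatial integration by parts and the once-differentiated continuity equation, which is algebraically the same computation as your direct differentiation of the cross term, and it then uses exactly your ingredients --- the $\frac18 P'(1)$ Young absorption yielding the coefficient $\frac78 P'(1)$, the bound \eqref{221} for $\|\nabla S_2\|_{L^2}$, and the estimate \eqref{5} for the $S_1$-contribution. One cosmetic slip in your write-up: once you have integrated by parts to reach $-\int\nabla\dive u\cdot\nabla S_1\,dx$, the density carries at most two derivatives (the worst term is $u\cdot\nabla^2\r$, bounded by $\|u\|_{L^\infty}\|\nabla^2\r\|_{L^2}$); the term $u\cdot\nabla^3\r$ you worry about lives only in $\nabla^2 S_1$ before that integration by parts, so the convective piece is even easier than you suggest.
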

\begin{proof}
Multiplying the equation \eqref{215} by $\nabla^2\r$ and integrating over $\R^3$, we get
\begin{equation*}
\int\p_t(\nabla u)\cdot\nabla^2\r dx-\int[\mu\Delta\nabla u+(\mu+\lambda)\nabla^2\dive u]\cdot\nabla^2\r dx+P'(1)\int|\nabla^2\r|^2 dx=\int\nabla S_2\cdot\nabla^2\r dx.
\end{equation*}
Using the transport equation, that is the first equation of \eqref{cns1}, it holds on
\begin{equation*}
\begin{split}
\int\p_t(\nabla u)\cdot\nabla^2\r dx&=\int\p_t(\nabla u\cdot\nabla^2\r)dx-\int\nabla u\cdot\nabla^2\r_t dx\\
&=\frac{d}{dt}\int\nabla u\cdot\nabla^2\r dx+\int\nabla\dive u\cdot\p_t(\nabla\r)dx\\
&=\frac{d}{dt}\int\nabla u\cdot\nabla^2\r dx+\int\nabla\dive u\cdot(\nabla S_1-\nabla\dive u)dx.
\end{split}
\end{equation*}
Then using H\"{o}lder and Cauchy inequalities, we obtain
\begin{equation*}
\begin{split}
&\frac{d}{dt}\int\nabla u\cdot\nabla^2\r dx+P'(1)\int|\nabla^2\r|^2 dx\\
&=\int[\mu\Delta\nabla u+(\mu+\lambda)\nabla^2\dive u]\cdot\nabla^2\r dx
  +\int\nabla S_2\cdot\nabla^2\r dx\\
&\quad +\int\nabla\dive u\cdot(\nabla\dive u-\nabla S_1)dx\\
&\lm \frac{1}{8} P'(1)\|\nabla^2\r\|^2_{L^2}+C\|\nabla^2u\|^2_{H^1}+C(\|\nabla S_2\|^2_{L^2}+\|S_1\|^2_{L^2}),
\end{split}
\end{equation*}
which, together with \eqref{5} and \eqref{221}, yields directly
\begin{equation*}
\begin{split}
\frac{d}{dt}\int\nabla u\cdot\nabla^2\r dx+\frac{7}{8}P'(1)\int|\nabla^2\r|^2 dx
\lm C\|\nabla^2u\|^2_{H^1}+C(\|\r\|^{\frac14}_{L^2}+\|\r \|_{H^1}+\|u \|_{H^1})\|\nabla^2\r\|^2_{L^2}.
\end{split}
\end{equation*}
Therefore, we complete the proof of this lemma.
\end{proof}

Combining all the estimates obtained in Lemmas \ref{lemm1}-\ref{lemm3},
we drive the following energy estimate.
\begin{lemm}\label{diss}
Under the assumptions of Theorem \ref{Thm1}, we define
\begin{equation*}
\mathcal{E}^2_1(t)\stackrel{\Delta}{=}\|\nabla u\|^2_{H^1}+P'(1)\|\nabla \r\|^2_{H^1}+2\delta_0\int\nabla u\cdot\nabla^2\r dx.
\end{equation*}
Then there exists a large time $T_0$, such that
\begin{equation}\label{1111}
\frac{d}{dt}\mathcal{E}^2_1(t)+c_* (\|\nabla^2 u\|_{H^1}^2+\|\nabla^2 \r\|_{L^2}^2) \lm 0,
\end{equation}
holds on for all $t\ge T_0$.
Here $c_*=\min{\{\mu,\delta_0P'(1)}\}$, and $\delta_0$ is a small constant.
\end{lemm}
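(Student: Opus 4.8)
The plan is to form a suitable linear combination of the three differential inequalities \eqref{estimate1}, \eqref{estimate2} and \eqref{estimate3} so that the left-hand side reproduces exactly $\frac{d}{dt}\mathcal{E}^2_1(t)$ together with a full dissipation, and then to show that every term on the resulting right-hand side can be absorbed into that dissipation once $t$ is large. Concretely, I would add twice \eqref{estimate1}, twice \eqref{estimate2}, and $2\delta_0$ times \eqref{estimate3}. Since $\|\nabla u\|^2_{H^1}=\|\nabla u\|^2_{L^2}+\|\nabla^2 u\|^2_{L^2}$ and likewise for $\r$, the time-derivative terms from $2\times\eqref{estimate1}$ and $2\times\eqref{estimate2}$ assemble into $\frac{d}{dt}(\|\nabla u\|^2_{H^1}+P'(1)\|\nabla\r\|^2_{H^1})$, while $2\delta_0\times\eqref{estimate3}$ contributes $\frac{d}{dt}\bigl(2\delta_0\int\nabla u\cdot\nabla^2\r\,dx\bigr)$; together these are precisely $\frac{d}{dt}\mathcal{E}^2_1(t)$.

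On the dissipative side the combination yields $2\mu\|\nabla^2 u\|^2_{H^1}+\frac74\delta_0 P'(1)\|\nabla^2\r\|^2_{L^2}$ plus the nonnegative divergence terms $2(\mu+\lambda)(\|\nabla\dive u\|^2_{L^2}+\|\nabla^2\dive u\|^2_{L^2})$, which are nonnegative because $2\mu+3\lambda\lg 0$ forces $\mu+\lambda\lg\frac13\mu>0$, and which I simply discard. For the right-hand side, writing $\Lambda(t):=\|\r\|^{\frac14}_{L^2}+\|u\|^{\frac14}_{L^2}+\|\nabla u\|^{\frac14}_{L^2}+\|\r\|_{H^1}+\|u\|_{H^1}$, all prefactors appearing in \eqref{estimate1}, \eqref{estimate2}, \eqref{estimate3} are bounded by $C\Lambda(t)$, and the decay estimate \eqref{decay} guarantees $\Lambda(t)\to 0$ as $t\to\infty$. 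The one term \emph{not} carrying such a vanishing prefactor is the contribution $2\delta_0 C\|\nabla^2 u\|^2_{H^1}$ coming from the $C\|\nabla^2u\|^2_{H^1}$ on the right of \eqref{estimate3}.

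I would then fix the constants in the right order. First I choose $\delta_0$ small and independent of time, small enough both that $\mathcal{E}^2_1(t)$ is positive definite via $|2\delta_0\int\nabla u\cdot\nabla^2\r\,dx|\lm\delta_0(\|\nabla u\|^2_{L^2}+\|\nabla^2\r\|^2_{L^2})$, and that $2\delta_0 C\lm\frac12\mu$, so the stubborn term $2\delta_0 C\|\nabla^2 u\|^2_{H^1}$ is swallowed by half of $2\mu\|\nabla^2 u\|^2_{H^1}$. With $\delta_0$ frozen, I invoke \eqref{decay} to pick $T_0$ so large that $\Lambda(t)$ is as small as desired for all $t\lg T_0$; then the remaining error terms $C\Lambda(t)(\|\nabla^2 u\|^2_{H^1}+\|\nabla^2\r\|^2_{L^2})$ are absorbed into the leftover dissipation. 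Tracking the surviving coefficients leaves at least $\mu\|\nabla^2 u\|^2_{H^1}+\delta_0 P'(1)\|\nabla^2\r\|^2_{L^2}\lg c_*(\|\nabla^2 u\|^2_{H^1}+\|\nabla^2\r\|^2_{L^2})$ with $c_*=\min\{\mu,\delta_0P'(1)\}$, which is exactly \eqref{1111}.

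The main obstacle is purely one of bookkeeping and ordering: the term $2\delta_0 C\|\nabla^2 u\|^2_{H^1}$ from \eqref{estimate3} has no decaying factor, so it cannot be defeated by enlarging $T_0$ alone; it must be controlled by the smallness of $\delta_0$, which is why $\delta_0$ has to be fixed \emph{before} $T_0$. Everything else rests on the single structural fact, furnished by \eqref{decay}, that the solution and its first-order derivatives decay, making $\|\r\|_{H^1}$, $\|u\|_{H^1}$ and the quarter-powers of the low-order norms uniformly small past a large time.
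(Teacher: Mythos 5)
Your proposal is correct and takes essentially the same route as the paper: the paper likewise sums \eqref{estimate1} and \eqref{estimate2}, adds $\delta_0$ times \eqref{estimate3} with $\delta_0$ fixed small \emph{first} to absorb the non-decaying $C\|\nabla^2 u\|^2_{H^1}$ term, and only then invokes the decay \eqref{decay} to choose $T_0$ so that the prefactor you call $\Lambda(t)$ falls below $\frac14\min\{\mu,\delta_0 P'(1)\}$ and is swallowed by the dissipation, ending with $c_*=\min\{\mu,\delta_0 P'(1)\}$ and the equivalence \eqref{E}. Your factor-of-two normalization of the linear combination is the only (immaterial) difference.
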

\begin{proof}
Adding estimate \eqref{estimate1} with \eqref{estimate2}, it holds on
\begin{equation}\label{241}
\begin{split}
&\frac{d}{dt}\left\{\frac12\|\nabla u\|_{H^1}^2+\frac{P'(1)}{2}\|\nabla\r\|_{H^1}^2\right\}
   +\mu \|\nabla^2 u\|_{H^1}^2\\
&\lm  C(\|\r\|^{\frac14}_{L^2}+\|u\|^{\frac14}_{L^2}+\|\nabla u\|^{\frac14}_{L^2}+\|\r \|_{H^1}+\|u \|_{H^1})
       (\|\nabla^2 u\|^2_{H^1}+\|\nabla^2 \r\|^2_{L^2}).
\end{split}
\end{equation}
Multiplying $\delta_0$ to \eqref{estimate3} and adding with \eqref{241},
we choose $\delta_0$  being small enough to obtain
\begin{equation*}
\begin{split}
&\frac{d}{dt}\left\{\frac12\|\nabla u\|_{H^1}^2+\frac{P'(1)}{2}\|\nabla\r\|_{H^1}^2
  +\delta_0 \int\nabla u\cdot\nabla^2\r dx \right\}\\
& \quad +\frac{3\mu}{4}\|\nabla^2 u\|_{H^1}^2 +\frac{3\delta_0}{4}P'(1)\|\nabla^2 \r\|_{L^2}^2\\
&\lm C(\|\r\|^{\frac14}_{L^2}+\|u\|^{\frac14}_{L^2}+\|\nabla u\|^{\frac14}_{L^2}+\|\r \|_{H^1}+\|u \|_{H^1})
       (\|\nabla^2 u\|^2_{H^1}+\|\nabla^2 \r\|^2_{L^2}).
\end{split}
\end{equation*}
Thanks to the decay rate \eqref{decay} obtained in Theorem \ref{Thm1}, one may conclude that
\begin{equation*}
\|\r\|^{\frac14}_{L^2}+\|u\|^{\frac14}_{L^2}+\|\nabla u\|^{\frac14}_{L^2}+\|\r \|_{H^1}+\|u \|_{H^1}
\le C(1+t)^{-\frac{1}{4}(\frac{2}{p}-1)},
\end{equation*}
and hence, there exists a large time $T_0>0$ such that
$$
C(\|\r\|^{\frac14}_{L^2}+\|u\|^{\frac14}_{L^2}+\|\nabla u\|^{\frac14}_{L^2}+\|\r \|_{H^1}+\|u \|_{H^1})
\le \frac{1}{4}\min{\{\mu,\delta_0P'(1)}\},
$$
holds on for all $t\ge T_0$. Thus, we obtain the energy estimate
\begin{equation*}
 \frac{d}{dt}\left\{\|\nabla u\|_{H^1}^2+{P'(1)}\|\nabla\r\|_{H^1}^2
  +2 \delta_0 \int\nabla u\cdot\nabla^2\r dx \right\}
  +\mu \|\nabla^2 u\|_{H^1}^2 +\delta_0 P'(1)\|\nabla^2 \r\|_{L^2}^2
  \lm 0.
\end{equation*}
Taking $c_*=\min{\{\mu,\delta_0P'(1)}\}$, it holds on
\begin{equation*}
\frac{d}{dt}\mathcal{E}^2_1(t)+c_* (\|\nabla^2 u\|_{H^1}^2+\|\nabla^2 \r\|_{L^2}^2)\lm0.
\end{equation*}
 By virtue of the smallness of $\delta_0$, there are two constants $c_1$ and $C_1$(independent of time) such that
\begin{equation}\label{E}
c_1(\|\nabla u\|^2_{H^1}+\|\nabla \r\|^2_{H^1})\lm\mathcal{E}^2_1(t)\lm C_1(\|\nabla u\|^2_{H^1}+\|\nabla \r\|^2_{H^1}),
\end{equation}
Therefore, we complete the proof of this lemma.
\end{proof}

Finally, let us prove the upper bound of decay for the first and second order spatial
derivatives of global solution to the Cauchy problem \eqref{cns1} with large initial data.
\begin{lemm}\label{lemma25}
Under the assumptions of Theorem \ref{Thm1}, there exists a large time $T_1$, such that
\begin{equation}\label{decay2}
\|\nabla \r (t)\|_{H^1}+\|\nabla u(t)\|_{H^1}
+\|\partial_t \r (t)\|_{L^2}+\|\partial_t u (t)\|_{L^2}
\lm C(1+t)^{-\frac{3}{4}(\frac2p-1)-\frac{1}{2}},
\end{equation}
holds on for all $t\lg T_1$.
Here $C$ is a constant independent of time.
\end{lemm}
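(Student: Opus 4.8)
The plan is to combine the dissipative energy inequality \eqref{1111} of Lemma \ref{diss} with the Fourier splitting method of Schonbek \cite{schonbek1}. The essential difficulty is that the dissipation in \eqref{1111} controls only the \emph{second}-order derivatives $\|\nabla^2 u\|_{H^1}^2+\|\nabla^2\r\|_{L^2}^2$, whereas the energy $\mathcal{E}^2_1(t)$ is, by \eqref{E}, equivalent to the \emph{first}-order quantity $\|\nabla u\|_{H^1}^2+\|\nabla\r\|_{H^1}^2$. This one-derivative gap must be bridged by splitting frequency space and paying a factor of $(1+t)$ on the high-frequency part, which forces the splitting radius and the absorbing constant to be tuned against the slower decay \eqref{decay} already available from Theorem \ref{Thm1}.

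Concretely, first I would introduce the time-dependent ball $\Sigma_t:=\{\xi\in\R^3:|\xi|^2\le \frac{g}{1+t}\}$ with a constant $g$ to be fixed large, and pass to Fourier variables by Plancherel. Writing $A(t):=\int|\xi|^2(|\widehat{\r}|^2+|\widehat{u}|^2)\,d\xi$, $B(t):=\int|\xi|^4(|\widehat{\r}|^2+|\widehat{u}|^2)\,d\xi$ and $D(t):=\|\nabla^2 u\|_{H^1}^2+\|\nabla^2\r\|_{L^2}^2$, I note $\mathcal{E}^2_1(t)\sim A(t)+B(t)$ by \eqref{E} and $B(t)\le D(t)$. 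The only piece not directly dominated by $D(t)$ is $A(t)$: on $\Sigma_t$ I bound $\int_{\Sigma_t}|\xi|^2(\cdots)\,d\xi\le \frac{g}{1+t}\|(\r,u)\|_{L^2}^2$, while on $\Sigma_t^c$ I use $|\xi|^2\le\frac{1+t}{g}|\xi|^4$ to get $\int_{\Sigma_t^c}|\xi|^2(\cdots)\,d\xi\le\frac{1+t}{g}D(t)$. Invoking $\|(\r,u)\|_{L^2}^2\le C(1+t)^{-2\beta(p)}$ from \eqref{decay}, this yields for $t$ large
\[
\mathcal{E}^2_1(t)\le Cg(1+t)^{-2\beta(p)-1}+\frac{C(1+t)}{g}D(t),
\]
and hence the lower bound $D(t)\ge \frac{g}{C(1+t)}\mathcal{E}^2_1(t)-Cg^2(1+t)^{-2\beta(p)-2}$.

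Next I would feed this into \eqref{1111} to obtain the scalar differential inequality
\[
\frac{d}{dt}\mathcal{E}^2_1(t)+\frac{c_* g}{C(1+t)}\mathcal{E}^2_1(t)\le Cg^2(1+t)^{-2\beta(p)-2}.
\]
Choosing $g$ large enough that $k:=\frac{c_* g}{C}>2\beta(p)+1$, multiplying by the integrating factor $(1+t)^{k}$ and integrating from a suitably large time $T_1\ge T_0$, the right-hand side integrates to a term of order $(1+t)^{k-2\beta(p)-1}$ (using $k-2\beta(p)-2>-1$), whence $\mathcal{E}^2_1(t)\le C(1+t)^{-2\beta(p)-1}$. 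By the equivalence \eqref{E} this gives $\|\nabla\r(t)\|_{H^1}+\|\nabla u(t)\|_{H^1}\le C(1+t)^{-\frac34(\frac2p-1)-\frac12}$, which is the spatial-derivative part of \eqref{decay2}.

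Finally, the time-derivative terms are read off directly from \eqref{cns1}: from $\p_t\r=-\dive u+S_1$ and $\p_t u=\mu\Delta u+(\mu+\lambda)\nabla\dive u-P'(1)\nabla\r+S_2$ I bound $\|\p_t\r\|_{L^2}\le\|\nabla u\|_{L^2}+\|S_1\|_{L^2}$ and $\|\p_t u\|_{L^2}\le C\|\nabla^2 u\|_{L^2}+C\|\nabla\r\|_{L^2}+\|S_2\|_{L^2}$, then use the nonlinear estimates \eqref{5} and \eqref{221} together with \eqref{decay} to see that the $S_1,S_2$ contributions are products of two decaying factors and hence of strictly higher order, while the linear terms already decay at the just-established rate $(1+t)^{-\frac34(\frac2p-1)-\frac12}$; this completes \eqref{decay2}. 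The main obstacle throughout is precisely the first/second-order mismatch between energy and dissipation, which is resolved by the $g$-large Fourier splitting and by recycling the slower lower-order decay \eqref{decay}.
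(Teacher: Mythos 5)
Your proposal is correct and follows essentially the same route as the paper: the energy inequality \eqref{1111} from Lemma \ref{diss}, Schonbek's Fourier splitting with a time-dependent ball of radius $\sim\bigl(\tfrac{g}{1+t}\bigr)^{1/2}$, absorption of the low-frequency remainder via the slower decay \eqref{decay}, a splitting constant chosen large enough that the integrating-factor exponent exceeds $2\beta(p)+1$ (the paper's explicit choice $R=\tfrac{6C_1}{pc_*}$ plays exactly this role), and the time derivatives read off from the equations \eqref{cns1}; your reformulation of the splitting as an upper bound on $\mathcal{E}^2_1(t)$ rather than termwise lower bounds on the dissipation, with $\|\nabla^2\r\|_{L^2}^2\le D(t)$ substituting for the paper's ``damping'' half of the density dissipation, is only a cosmetic reorganization. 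The single slip is the citation of \eqref{221} (which bounds $\|\nabla S_2\|_{L^2}$) where the bound on $\|S_2\|_{L^2}$ from the proof of Lemma \ref{lemm1} (estimates \eqref{216}, \eqref{211}, \eqref{212}) is what is actually needed for $\|\partial_t u\|_{L^2}$ --- a harmless misattribution, since that estimate is available and yields the stated rate.
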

\begin{proof}
In order to obtain the time decay rate \eqref{decay2}, our method here is to use the
Fourier splitting method(by Schonbek \cite{schonbek1}), which has been applied to obtain decay rate for the
incompressible Navier-Stokes equations in higher order derivative norm(cf.\cite{{schonbek2},{Schonbek-Wiegner}}).
The difficulty,
arising from the compressible Navier-Stokes equations, is the appearance of density that obeys the
transport equation rather than diffusive one. To get rid of this difficulty, our idea is to rewrite the
inequality \eqref{1111} as follows
\begin{equation}\label{enery}
\frac{d}{dt}\mathcal{E}^2_1(t)+\frac{c_*}{2}\int(|\nabla^2u|^2+|\nabla^3u|^2) dx+\frac{c_*}{2}\int|\nabla^2\r|^2 dx+\frac{c_*}{2}\int|\nabla^2\r|^2 dx\lm0.
\end{equation}
Define $S_0 :=\{\xi\in\R^3||\xi|\lm(\frac{R}{1+t})^{\frac12}\},$ then we can split the phase space $\R^3$ into two time-dependent regions. Here $R$ is a constant defined below.
By routine checking, we can get that
\begin{equation*}
\begin{split}
\int_{\R^3}|\nabla^2u|^2dx
&\lg\int_{\R^3 /S_0}|\xi|^4|\hat{u}|^2d\xi\lg\frac{R}{1+t}\int_{\R^3/ S_0}|\xi|^2|\hat{u}|^2d\xi\\
&=\frac{R}{1+t}\int_{\R^3}|\xi|^2|\hat{u}|^2d\xi-\frac{R}{1+t}\int_{S_0}|\xi|^2|\hat{u}|^2d\xi\\
&\lg \frac{R}{1+t}\int_{\R^3}|\xi|^2|\hat{u}|^2d\xi-\frac{R^2}{(1+t)^2}\int_{S_0}|\hat{u}|^2d\xi,
\end{split}
\end{equation*}
or equivalently
\begin{equation}\label{2511}
\|\nabla^2u\|^2_{L^2}\lg\frac{R}{1+t}\|\nabla u\|^2_{L^2}-\frac{R^2}{(1+t)^2}\|u\|^2_{L^2}.
\end{equation}
In an analogous manner, we ultimately obtain
\begin{equation}\label{252}
\|\nabla^3u\|^2_{L^2}\lg\frac{R}{1+t}\|\nabla^2u\|^2_{L^2}-\frac{R^2}{(1+t)^2}\|\nabla u\|^2_{L^2},
\end{equation}
and
\begin{equation}\label{253}
\|\nabla^2\r\|^2_{L^2}\lg\frac{R}{1+t}\|\nabla\r\|^2_{L^2}-\frac{R^2}{(1+t)^2}\|\r\|^2_{L^2}.
\end{equation}
Substituting the estimates \eqref{2511}-\eqref{253} into \eqref{enery}, one may show that
\begin{equation*}
\frac{d}{dt}\mathcal{E}^2_1(t)
+\frac{c_*}{2}\frac{R}{1+t}(\|\nabla u\|^2_{H^1}+\|\nabla\r \|^2_{L^2})
+\frac{c_*}{2}\|\nabla^2 \r\|^2_{L^2}
\lm \frac{c_* R^2}{2(1+t)^2}(\|u\|^2_{H^1}+\|\r\|^2_{L^2}),
\end{equation*}
holds on for all $t\ge T_0$($T_0$ defined in Lemma \ref{diss}).
Thanks to the equivalent relation \eqref{E}, the term $\|\nabla^2 \r\|^2_{L^2}$
on left handside of the above inequality plays a role of damping term.
And hence, it holds on
\begin{equation*}
\frac{d}{dt}\mathcal{E}^2_1(t)
+\frac{c_* R}{2(1+t)}(\|\nabla u\|^2_{H^1}+\|\nabla\r \|^2_{H^1})
\lm \frac{c_* R^2}{2(1+t)^2}(\|u\|^2_{H^1}+\|\r\|^2_{L^2}),
\end{equation*}
for all $t\ge T_1:=\max \{T_0, R-1\}$.

Thanks to decay estimate \eqref{decay} in Theorem \ref{Thm1} and equivalent relation \eqref{E}, we have
\begin{equation*}
\frac{d}{dt}\mathcal{E}^2_1(t)+\frac{c_*R}{2C_1(1+t)}\mathcal{E}^2_1(t)
\lm \frac{c_* R^2}{2}C(1+t)^{-\frac3p-\frac12}.
\end{equation*}
Choosing $R=\frac{6C_1}{pc_*}$ and multiplying the resulting inequality by $(1+t)^{\frac3p}$, it follows that
\begin{equation*}
\frac{d}{dt}\left[ (1+t)^{\frac3p} \mathcal{E}^2_1(t)\right] \lm C(1+t)^{-\frac12}.
\end{equation*}
For $T_1 {=}\max\{T_0, \frac{6C_1}{pc_*}-1\}$, the integration over $[T_1,t]$ yields directly
\begin{equation*}
\mathcal{E}^2_1(t)
\le (1+t)^{-\frac3p}(1+T_1)^{\frac3p}\mathcal{E}^2_1(T_1)
    +C(1+t)^{-\frac3p}[(1+t)^{\frac{1}{2}}-(1+T_1)^{\frac{1}{2}}],
\end{equation*}
which, together with uniform bound \eqref{uniform-bound} and equivalent relation \eqref{E}, implies
\begin{equation*}
\mathcal{E}^2_1(t)\lm C(1+t)^{-\frac{3}{2}(\frac2p-1)-1}.
\end{equation*}
Using the equivalent relation \eqref{E} again, then it holds on
\begin{equation}\label{251}
\|\nabla\r(t)\|^2_{H^1}+\|\nabla u(t)\|^2_{H^1}\lm C(1+t)^{-\frac{3}{2}(\frac2p-1)-1},
\end{equation}
for all $t \ge T_1 {=}\max\{T_0, \frac{6C_1}{pc_*}-1\}$.

Finally, we establish the decay rate for the time derivative of density and velocity.
Using the first equation of \eqref{cns1}, estimate \eqref{5} and decay rate \eqref{251}, we have
\begin{equation}\label{time1}
\begin{split}
\|\partial_t\r\|_{L^2}& \lm \|\dive u\|_{L^2}+\|S_1\|_{L^2}\\
&\lm \|\nabla u\|_{L^2}+C(\|\r \|_{H^1}+\|u \|_{H^1})(\|\nabla^2 u\|_{L^2}+\|\nabla^2 \r\|_{L^2})\\
&\lm C(1+t)^{-\frac34(\frac2p-1)-\frac12}
     +C(1+t)^{-\frac34(\frac2p-1)}(1+t)^{-\frac34(\frac2p-1)-\frac12}\\
&\lm  C(1+t)^{-\frac34(\frac2p-1)-\frac12}.
\end{split}
\end{equation}
In an analogous fashion, it follows that
\begin{equation}\label{time2}
\begin{split}
\|\partial_t u\|_{L^2}&\lm \mu\|\Delta u\|_{L^2}+(\mu+\lambda)\|\nabla\dive u\|_{L^2}+P'(1)\|\nabla\r\|_{L^2}+\|S_2\|_{L^2}\\
&\lm C(1+t)^{-\frac34(\frac2p-1)-\frac12}.
\end{split}
\end{equation}
The combination of \eqref{251}, \eqref{time1} and \eqref{time2} completes the proof of this lemma.
\end{proof}

\begin{rema}
The decay rate \eqref{decay2} tells us the fact that the first and second order spatial derivatives
of velocity and density converge to zero at the $L^2-$rate $(1+t)^{-\frac{3}{4}(\frac2p-1)-\frac{1}{2}}$
although the initial data $(\rho_0-1, u_0)$ may be large in the sense of $H^2\cap L^p(p\in[1, 2))$ norm.
It should be pointed out that the second order spatial derivative of velocity will converge to zero at the
$L^2-$rate $(1+t)^{-\frac{3}{4}(\frac2p-1)-1}$ for the classical incompressible Navier-Stokes equations
(cf.\cite{Schonbek-Wiegner}).
However, this is still an open problem for the compressible Navier-Stokes equations with large initial data,
or even the small one.
\end{rema}

\subsection{Lower Bound of Decay Rate}\label{lower-bound-decay}

\quad
In this subsection, the content of our analysis is to address the lower bound of decay rate
for the global solution of Cauchy problem \eqref{cns1}.
For the sake of simplicity, we only study the lower bound of decay rate for the global solution
with initial data of the form $(\varrho_0, u_0)\in H^2 \cap L^1$.
Now, we are in a position to prove the lower bounds of decay rates \eqref{decay31} and \eqref{decay32}. Let us define $m:=\rho u$, we rewrite \eqref{cns} in the perturbation form as
\begin{equation}\label{mm}
  	\left\{\begin{aligned}
  &\partial_t\r+\dive m=0,\\
   &\partial_tm-\mu\Delta m-(\mu+\lambda)\nabla\dive m+P'(1)\nabla\r=-\dive F,
     	\end{aligned}\right.
  \end{equation}
where the function $F=F(\r, u)$ is defined as
\begin{equation}
F:=(1+\r)u\otimes u+\mu\nabla(\r u)+(\mu+\lambda)\dive(\r u)\mathbb{I}_{3\times 3}+(P(1+\r)
   -P(1)-P'(1)\r)\mathbb{I}_{3 \times 3}.
\end{equation}
Here the pressure $P(\rho)=\rho^\gamma$ with $\gamma \ge 1$. The initial data is given as
\begin{equation*}
(\r,m)(x,t)|_{t=0}=(\r_0, m_0)(x).
\end{equation*}
In order to obtain the lower decay estimate, we need to analyze the linearized part:
\begin{equation}\label{linear1}
  	\left\{\begin{aligned}
  &\partial_t\r_l+\dive m_l=0,\\
   &\partial_tm_l-\mu\Delta m_l-(\mu+\lambda)\nabla\dive m_l+P'(1)\nabla\r_l=0,
     	\end{aligned}\right.
  \end{equation}
  together with the initial data
  \begin{equation*}
  (\r_l, m_l)(x,t)|_{t=0}=(\r_0, m_0).
  \end{equation*}
  Here the initial data for the linearized part \eqref{linear1} is the same as the nonlinear part \eqref{mm}.
  The following estimates can be found in \cite{{Hu-Wu}, {li-zhang1}}.
  \begin{prop}\label{decay-linear}
Assume the Fourier transform
$\mathcal{F}(\r_0, m_0):=(\widehat{\r_0}, \widehat{m_0})$
satisfies
$|(\widehat{\r_0}, \widehat{m_0})|\le C|\xi|^\eta$ for $0\le |\xi| \ll 1$.
Then, the solution $(\r_l, m_l)$ of linearized system \eqref{linear1}
has the following estimate
\begin{equation}\label{311}
\|(\r_l, m_l)(t)\|_{L^2}
\le C(1+t)^{-(\frac{3}{4}+\frac\eta2)}
    (\|(\widehat{\r_0}, \widehat{m_0})\|_{L^\infty}+\|(\r_0, m_0)\|_{L^2}),
\end{equation}
for all $t \ge 0$.
Furthermore, if the Fourier transform
$\mathcal{F}(\r_0, m_0)=(\widehat{\r_0}, \widehat{m_0})$
satisfies
$$
|\widehat{\r_0}|\ge c_0, ~ \widehat{m_0}=0, ~0 \le |\xi| \ll 1,
$$
with $c_0>0$ a constant, then we have for large time $t$
\begin{equation}\label{312}
\min \{\| \r_l(t)\|_{L^2}, \|m_l(t)\|_{L^2}\}
\ge c_1 (1+t)^{-\frac{3}{4}},
\end{equation}
where $c_1$ and $C$ are positive constants independent of time $t$.
\end{prop}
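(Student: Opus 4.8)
The plan is to pass to the Fourier side and diagonalize the linear semigroup. Applying the Fourier transform to \eqref{linear1} turns it into the ODE system $\partial_t\hat\r_l=-i\xi\cdot\hat m_l$ and $\partial_t\hat m_l=-\mu|\xi|^2\hat m_l-(\mu+\lambda)\xi(\xi\cdot\hat m_l)-iP'(1)\xi\,\hat\r_l$. Following the standard Hodge decomposition in frequency I would split $\hat m_l$ into its transverse part $\hat m_l^\perp$ (orthogonal to $\xi$) and the scalar longitudinal part $\phi:=\xi\cdot\hat m_l/|\xi|$. The transverse part solves $\partial_t\hat m_l^\perp+\mu|\xi|^2\hat m_l^\perp=0$ and thus decays like $e^{-\mu|\xi|^2t}$, while $(\hat\r_l,\phi)$ obeys a closed $2\times2$ system whose matrix has characteristic equation $\Lambda^2+(2\mu+\lambda)|\xi|^2\Lambda+P'(1)|\xi|^2=0$. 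For $|\xi|\ll1$ the two roots are complex, $\Lambda_\pm=-a\pm ib$ with $a=\tfrac{2\mu+\lambda}{2}|\xi|^2$ and $b=\sqrt{P'(1)|\xi|^2-a^2}\approx\sqrt{P'(1)}\,|\xi|$, so the low-frequency evolution is a Gaussian decay $e^{-a t}$ modulated by oscillations of frequency $\approx\sqrt{P'(1)}\,|\xi|$; for $|\xi|$ bounded away from $0$ both roots (and $-\mu|\xi|^2$) have real part $\le-c_0<0$.

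For the upper bound \eqref{311} I would use Plancherel and split $\R^3=\{|\xi|\le r_0\}\cup\{|\xi|>r_0\}$ for a small fixed $r_0$. On the low-frequency region the explicit solution formula gives the pointwise bound $|(\hat\r_l,\hat m_l)(t,\xi)|\lm Ce^{-c|\xi|^2t}|(\hat\r_0,\hat m_0)(\xi)|$; inserting the hypothesis $|(\hat\r_0,\hat m_0)|\lm C|\xi|^\eta$ and using the Gaussian moment $\int_{\R^3}e^{-2c|\xi|^2t}|\xi|^{2\eta}\,d\xi\lm C(1+t)^{-\frac32-\eta}$ (seen at once via the rescaling $\xi=y/\sqrt t$) produces the rate $(1+t)^{-\frac34-\frac\eta2}$. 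On the high-frequency region the real parts of $\Lambda_\pm$ and of $-\mu|\xi|^2$ are all $\le-c_0$, so this piece is $\lm Ce^{-c_0t}\|(\r_0,m_0)\|_{L^2}^2$ and is dominated by the polynomial rate; the norms $\|(\hat\r_0,\hat m_0)\|_{L^\infty}$ and $\|(\r_0,m_0)\|_{L^2}$ enter only as the multiplicative constants.

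For the lower bound \eqref{312} I would exploit the structure of the data $\hat m_0=0$, $|\hat\r_0|\lg c_0$ on $\{|\xi|\ll1\}$. Solving the $2\times2$ system with $\phi(0)=0$, an elementary computation gives the closed forms
\[
\hat\r_l(t,\xi)=\hat\r_0\,e^{-at}\Big(\cos bt+\tfrac ab\sin bt\Big),\qquad
\phi(t,\xi)=-i\hat\r_0\,e^{-at}\,\tfrac{P'(1)|\xi|}{b}\sin bt,
\]
and $\hat m_l=\phi\,\xi/|\xi|$ because the transverse component vanishes. Since $a/b\to0$ and $P'(1)|\xi|/b\to\sqrt{P'(1)}$ as $|\xi|\to0$, on the low-frequency ball one has, to leading order, $|\hat\r_l|^2\gtrsim c_0^2e^{-2at}\cos^2bt$ and $|\hat m_l|^2\gtrsim P'(1)c_0^2e^{-2at}\sin^2bt$. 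Writing $\cos^2bt=\tfrac12(1+\cos2bt)$ and $\sin^2bt=\tfrac12(1-\cos2bt)$, the non-oscillatory halves integrate to $\int_{|\xi|\le r_0}e^{-2at}\,d\xi\gtrsim(1+t)^{-\frac32}$, which is exactly the desired size.

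The hard part will be to show that the oscillatory remainders do not cancel this leading term, i.e.\ that $\int_{|\xi|\le r_0}e^{-2at}\cos(2bt)\,d\xi=o\big((1+t)^{-\frac32}\big)$. My plan here is again the rescaling $\xi=y/\sqrt t$, after which the remainder becomes $t^{-3/2}\int_{|y|\le r_0\sqrt t}e^{-(2\mu+\lambda)|y|^2}\cos\!\big(2\sqrt{P'(1)}\,|y|\sqrt t\,\big)\,dy$; passing to radial coordinates this is the cosine transform of the fixed $L^1$ weight $r^2e^{-(2\mu+\lambda)r^2}$ evaluated at frequency $2\sqrt{P'(1)}\sqrt t\to\infty$, which tends to $0$ by the Riemann--Lebesgue lemma (there is no stationary point away from the origin). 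Hence for large $t$ the leading halves survive and yield $\|\r_l(t)\|_{L^2},\|m_l(t)\|_{L^2}\lg c_1(1+t)^{-\frac34}$. A secondary technical point I would need to track is that the cross terms generated by the factor $\tfrac ab\sin bt$ in $\hat\r_l$, as well as the $O(|\xi|^2)$ corrections hidden in $b$, are genuinely lower order and do not spoil the leading estimate.
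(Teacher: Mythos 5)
Your proposal is correct and follows essentially the same route as the source of this result: the paper does not prove Proposition \ref{decay-linear} itself but imports it from \cite{Hu-Wu, li-zhang1}, and those references argue exactly as you do --- diagonalizing the semigroup in Fourier variables (transverse heat part plus a $2\times2$ longitudinal system with eigenvalues $-\frac{2\mu+\lambda}{2}|\xi|^2\pm ib$, $b=\sqrt{P'(1)|\xi|^2-a^2}$), getting the upper bound from Gaussian moments after a low/high-frequency split, and the lower bound from the $\cos^2/\sin^2$ averaging with the oscillatory remainder handled by the rescaling $\xi=y/\sqrt{t}$ and Riemann--Lebesgue. The technical points you flag (the $\frac{a}{b}$ cross terms, which integrate to $O(t^{-2})$; the $O(|y|^3/\sqrt{t})$ phase correction hidden in $b$; and the high-frequency point where the two longitudinal eigenvalues merge, so the semigroup bound there carries a harmless factor $(1+t)e^{-c_0 t}$) are indeed routine and do not affect the stated rates.
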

 Define $\r_\delta:=\r-\r_l$ and $m_\delta:=m-m_l$,
 then $(\r_{\delta}, m_{\delta})$ will satisfy the following equations
  \begin{equation}\label{eq-difference}
  \left\{
  \begin{aligned}
  &\p_t\r_\delta+\dive m_\delta=0,\\
  &\partial_tm_\delta-\mu\Delta m_\delta-(\mu+\lambda)\nabla\dive m_\delta+P'(1)\nabla\r_\delta=-\dive F,
  \end{aligned}
  \right.
  \end{equation}
  with the zero initial data
  \begin{equation}\label{non initial}
  (\r_\delta, m_\delta)(x,t)|_{t=0}=(0,0).
  \end{equation}

Define the differential operator $B$:
\begin{gather}\label{Op-B}
B=
\begin{pmatrix}
0            & -{\rm div}\\
-P'(1)\nabla & \mu\Delta+(\mu+\lambda)\nabla{\rm div}
\end{pmatrix}
\end{gather}
then we can write the solution of \eqref{linear1} and \eqref{eq-difference} as
$$
(\r_l(t), m_l(t))^{tr}=K(t)(\r_0, m_0)^{tr},
$$
and
$$(\r_\delta(t), m_\delta(t))^{tr}=\int_0^tK(t-\tau)(0, -\dive F)^{tr}(\tau)d\tau,$$ respectively,
where $K(t)$ is the solution semigroup defined by $K(t)=e^{tB}, t \ge 0$.
 First of all, let us to establish the upper bound of decay rate for the difference $(\r_\delta, m_\delta)$.
  \begin{lemm}\label{lemma27}
  Let $p=1$, and suppose all the assumptions of Theorem \ref{Thm1} hold on.
  Assume $(\r_\delta, m_\delta)$ be the smooth solution of
  the Cauchy problem \eqref{eq-difference}-\eqref{non initial}.
  Then, it holds on
   \begin{equation}\label{decay-difference}
   \|(\r_\delta, m_\delta)(t)\|_{L^2}
   \le  C(1+t)^{-\frac54},
   \end{equation}
   where $C$ is a constant independent of time.
  \end{lemm}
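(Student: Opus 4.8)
The plan is to run the Duhamel representation
$(\r_\delta(t),m_\delta(t))^{tr}=\int_0^t K(t-\tau)(0,-\dive F)^{tr}(\tau)\,d\tau$
recorded above against the sharp linear decay of Proposition \ref{decay-linear}, extracting the extra half power of decay (from $(1+t)^{-3/4}$ to $(1+t)^{-5/4}$) entirely from the divergence structure of the source. The point is that $-\dive F$ is a pure divergence, so its Fourier transform vanishes linearly at the origin: $|\mathcal{F}(\dive F)(\xi)|=|\xi\cdot\widehat{F}(\xi)|\lm|\xi|\,\|F\|_{L^1}$. Regarding $(0,-\dive F(\tau))$ as the datum for the semigroup $K(t-\tau)=e^{(t-\tau)B}$ and invoking time-translation invariance of the linear flow, the hypothesis of Proposition \ref{decay-linear} then holds with $\eta=1$ for each factor of the integrand.

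First I would freeze $\tau$ and apply \eqref{311} with $\eta=1$, which yields
\begin{equation*}
\|K(t-\tau)(0,-\dive F(\tau))\|_{L^2}
\lm C(1+t-\tau)^{-\frac54}\big(\|\mathcal{F}(\dive F)(\tau)\|_{L^\infty}+\|\dive F(\tau)\|_{L^2}\big)
\lm C(1+t-\tau)^{-\frac54}\big(\|\nabla F(\tau)\|_{L^1}+\|\nabla F(\tau)\|_{L^2}\big),
\end{equation*}
where I used $\|\mathcal{F}(\dive F)\|_{L^\infty}\lm\|\dive F\|_{L^1}\lm\|\nabla F\|_{L^1}$ and $\|\dive F\|_{L^2}\lm\|\nabla F\|_{L^2}$. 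The whole matter then reduces to showing the nonlinear flux obeys $\|\nabla F(\tau)\|_{L^1}+\|\nabla F(\tau)\|_{L^2}\lm C(1+\tau)^{-2}$.

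The second step is the nonlinear bookkeeping. Differentiating the explicit form of $F$, the slowest contributions are the quadratic ones $\nabla(u\otimes u)$, $\nabla^2(\r u)$ and $\nabla\big(P(1+\r)-P(1)-P'(1)\r\big)\sim\r\nabla\r$; the cubic remainder $\nabla(\r\,u\otimes u)$ and every density-dependent coefficient decay strictly faster and are harmless. For the $L^1$ part I would land each term, by H\"older, on a product of two factors each carrying a solution norm, e.g.\ $\|u\nabla u\|_{L^1}\lm\|u\|_{L^2}\|\nabla u\|_{L^2}$, $\|\nabla^2\r\, u\|_{L^1}\lm\|\nabla^2\r\|_{L^2}\|u\|_{L^2}$, $\|\r\nabla\r\|_{L^1}\lm\|\r\|_{L^2}\|\nabla\r\|_{L^2}$; for the $L^2$ part I would peel off the $L^\infty,L^3,L^6$ factors by Gagliardo--Nirenberg together with the uniform bound \eqref{uniform-bound}. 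The crucial input is the improved derivative decay of Lemma \ref{lemma25}: for $p=1$ one has $\|(\r,u)\|_{L^2}\lm C(1+t)^{-\frac34}$ from \eqref{decay} and $\|\nabla(\r,u)\|_{H^1}\lm C(1+t)^{-\frac54}$ from \eqref{decay2}. Feeding these rates into the products shows every term of $\nabla F$ decays at least like $(1+\tau)^{-2}$ in $L^1$ and strictly faster in $L^2$, which gives the claimed bound.

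Finally, inserting this into the Duhamel integral and splitting $[0,t]=[0,\tfrac t2]\cup[\tfrac t2,t]$,
\begin{equation*}
\|(\r_\delta,m_\delta)(t)\|_{L^2}
\lm C\int_0^t(1+t-\tau)^{-\frac54}(1+\tau)^{-2}\,d\tau
\lm C(1+t)^{-\frac54},
\end{equation*}
since on $[0,\tfrac t2]$ the kernel is $\lm C(1+t)^{-5/4}$ while $\int_0^\infty(1+\tau)^{-2}d\tau<\infty$, and on $[\tfrac t2,t]$ one has $(1+\tau)^{-2}\lm C(1+t)^{-2}$ while $\int_0^\infty(1+s)^{-5/4}ds<\infty$. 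I expect the main obstacle to be the second step: because $\nabla F$ genuinely involves first and second spatial derivatives of the solution (through $\mu\nabla(\r u)$ and $(\mu+\lambda)\dive(\r u)$), the basic rate \eqref{decay} alone is insufficient and one must lean on the second-order decay \eqref{decay2} just obtained in Lemma \ref{lemma25}; one also has to verify that terms such as $\nabla^2(\r u)$ actually lie in $L^1$, which again follows from \eqref{decay2} and \eqref{uniform-bound}. The resulting rate $(1+t)^{-5/4}$ is faster than the linear rate $(1+t)^{-3/4}$ in \eqref{312}, which is exactly what the subsequent lower-bound argument will require.
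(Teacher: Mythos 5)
Your proposal is correct, and its skeleton---Duhamel's formula combined with Proposition \ref{decay-linear} at $\eta=1$, the extra half power coming from the divergence structure of the source---is exactly the paper's. Where you genuinely diverge is in how the time integral of the nonlinear flux is controlled. The paper keeps the low-frequency prefactor as $\||\xi|^{-1}\mathcal{F}(\dive F)\|_{L^\infty}\le\|F\|_{L^1}\le C\|(\r,u)\|_{H^1}^2$, and in bounding $\|\nabla F\|_{L^2}$ it places $\r\nabla^2u$ in $L^6\times L^3$, which drags in $\|\nabla^2u\|_{H^1}$ and hence $\|\nabla^3u\|_{L^2}$---a quantity with no pointwise-in-time decay, only $L^2_t$ integrability by \eqref{uniform-bound}. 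The paper is therefore forced into a H\"older-in-time splitting together with the weighted dissipation bound $\int_0^t(1+\tau)^2(\|\nabla^2u\|^2_{H^1}+\|\nabla^2\r\|^2_{L^2})d\tau\le C$ (estimate \eqref{113}, obtained by multiplying the energy inequality \eqref{1111} by $(1+t)^2$). You instead insist on the pointwise bound $\|\nabla F(\tau)\|_{L^1\cap L^2}\lm C(1+\tau)^{-2}$, which is available precisely because your H\"older splittings avoid third derivatives: e.g.\ $\|\r\nabla^2u\|_{L^2}\le\|\r\|_{L^\infty}\|\nabla^2u\|_{L^2}$ with $\|\r\|_{L^\infty}\le C\|\r\|^{\frac14}_{L^2}\|\nabla^2\r\|^{\frac34}_{L^2}\lm C(1+\tau)^{-\frac98}$, while the $L^1$ products such as $\|u\|_{L^2}\|\nabla^2\r\|_{L^2}$ give $(1+\tau)^{-\frac34-\frac54}=(1+\tau)^{-2}$ from \eqref{decay} and \eqref{decay2} at $p=1$; the final convolution estimate is then standard. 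This buys a shorter argument---a single convolution bound replaces the detour through \eqref{113}---whereas the paper's route is more robust, surviving even when no pointwise rate for second derivatives is at hand. Two small points you should still pin down: your low-frequency prefactor $\|\mathcal{F}(\dive F)\|_{L^\infty}\le\|\nabla F\|_{L^1}$ is legitimate but stronger than necessary (the constant in the hypothesis $|\mathcal{F}(\dive F)|\le C|\xi|$ is $\|F\|_{L^1}$, which decays only like $(1+\tau)^{-\frac32}$ and already suffices for that piece); and since \eqref{decay2} holds only for $t\ge T_1$, you must observe that on $[0,T_1]$ all your product estimates are uniformly bounded via \eqref{uniform-bound}, so that $\|\nabla F\|_{L^1\cap L^2}\lm C(1+\tau)^{-2}$ holds for all $\tau\ge0$ after enlarging $C$ by a factor $(1+T_1)^2$.
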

  \begin{proof}
  By Duhamel principle and estimate \eqref{311}, we have
  \begin{equation}\label{111}
  \begin{split}
 \|(\r_\delta, m_\delta)(t)\|_{L^2}
 &\le \int_0^t(1+t-\tau)^{-\frac54}(\||\xi|^{-1}\mathcal{F}(\dive F)\|_{L^\infty}+\|\dive F\|_{L^2})d\tau \\&
 \le C \int_0^t(1+t-\tau)^{-\frac54}(\|F\|_{L^1}+\|\nabla F\|_{L^2})d\tau.
 \end{split}
  \end{equation}
  By H\"{o}lder and Sobolev inequalities, one may check that
\begin{equation}\label{01}
\|F\|_{L^1}
\le C((1+\|\r\|_{L^\infty})\|u\|_{L^2}^2+\|\nabla\r\|_{L^2}\|u\|_{L^2}
+\|\r\|_{L^2}\|\nabla u\|_{L^2}+\|\r\|^2_{L^2})
\le C\|(\r, u)\|^2_{H^1},
\end{equation}
and
\begin{equation}\label{02}
\begin{split}
\|\nabla F\|_{L^2}&\le C(\|\nabla u\|_{L^3}\|u\|_{L^6}+\|\nabla\r\|_{L^3}\|u\|_{L^\infty}\|u\|_{L^6}+\|u\|_{L^\infty}\|\r\|_{L^6}\|\nabla u\|_{L^3})\\&
\quad +C(\|u\|_{L^\infty}\|\nabla^2\r\|_{L^2}+\|\nabla u\|_{L^\infty}\|\nabla\r\|_{L^2}+\|\r\|_{L^6}\|\nabla^2u\|_{L^3}+\|\r\|_{L^6}\|\nabla\r\|_{L^3})\\&
\le C(\|\nabla u\|_{H^1}\|\nabla u\|_{L^2}+\|\nabla\r\|_{H^1}\|u\|_{H^2}\|\nabla u\|_{L^2}+\|u\|_{H^2}\|\nabla\r\|_{L^2}\|\nabla u\|_{H^1})\\&
 \quad +C(\|\nabla u\|^{\frac12}_{L^2}\|\nabla^2u\|^{\frac12}_{L^2}\|\nabla^2\r\|_{L^2}+\|\nabla u\|^{\frac14}_{L^2}\|\nabla^3u\|^{\frac34}_{L^2}\|\nabla\r\|_{L^2})\\&
 \quad +C(\|\nabla\r\|_{L^2}\|\nabla^2u\|_{H^1}+\|\nabla\r\|_{L^2}\|\nabla\r\|_{H^1})\\&
 \le C(\|(\r,u)\|^2_{H^1}+\|\nabla u\|^{\frac12}_{L^2}\|\nabla^2\r\|_{L^2}+\|\nabla(\r, u)\|_{L^2}(\|\nabla^2u\|_{H^1}+\|\nabla^2\r\|_{L^2})),
\end{split}
\end{equation}
where we have used the uniform estimate \eqref{uniform-bound} and Young inequality in the last inequality.
Then, the combination of \eqref{111}- \eqref{02}  yields immediatelly
  \begin{equation}\label{0010}
  \begin{split}
  \|(\r_\delta, m_\delta)(t)\|_{L^2}
  &\le C\int_0^t(1+t-\tau)^{-\frac54}\|(\r, u)\|^2_{H^1}d\tau
  + C \int_0^t(1+t-\tau)^{-\frac54}\|\nabla u\|^{\frac12}_{L^2}\|\nabla^2\r\|_{L^2} d\tau\\
  &\quad  +C\int_0^t(1+t-\tau)^{-\frac54}\|\nabla(\r, u)\|_{L^2}(\|\nabla^2u\|_{H^1}+\|\nabla^2\r\|_{L^2})d\tau.
    \end{split}
  \end{equation}
 Using H\"{o}lder inequality, we have
 \begin{equation}\label{0011}
 \begin{split}
 &\int_0^t(1+t-\tau)^{-\frac54}\|\nabla(\r, u)\|_{L^2}(\|\nabla^2u\|_{H^1}+\|\nabla^2\r\|_{L^2})d\tau \\
 &\le \left\{\int_0^t(1+t-\tau)^{-\frac52}(1+\tau)^{-2}\|\nabla(\r, u)\|^2_{L^2}d\tau \right\}^{\frac12}
     \left\{\int_0^t(1+\tau)^2(\|\nabla^2u\|^2_{H^1}+\|\nabla^2\r\|^2_{L^2})d\tau \right\}^{\frac12} \\
 &\le C  \left\{\int_0^t(1+t-\tau)^{-\frac52}(1+\tau)^{-2}\|\nabla(\r, u)\|^2_{L^2}d\tau \right\}^{\frac12},
 \end{split}
\end{equation}
 where we have used the estimate
  \begin{equation}\label{113}
  \int_0^t(1+\tau)^2(\|\nabla^2u\|^2_{H^1}+\|\nabla^2\r\|^2_{L^2})d\tau\le C,
    \end{equation}
  here $C$ is a positive constant independent of time. Indeed, recall the inequality \eqref{1111}, we have
\begin{equation*}
\frac{d}{dt}\mathcal{E}^2_1(t)+c_* (\|\nabla^2 u\|_{H^1}^2+\|\nabla^2 \r\|_{L^2}^2) \le 0, \quad t\ge T_0.
\end{equation*}
Multiplying the above inequality by $(1+t)^2$  yields directly
\begin{equation*}
\frac{d}{dt}((1+t)^2\mathcal{E}^2_1(t))
+c_*(1+t)^2 (\|\nabla^2 u\|_{H^1}^2+\|\nabla^2 \r\|_{L^2}^2)
\le 2(1+t)\mathcal{E}^2_1(t),
\end{equation*}
which, together with equivalent relation \eqref{E}
and decay rate \eqref{decay2} with $p=1$, gives directly
\begin{equation*}
\frac{d}{dt}[(1+t)^2\mathcal{E}^2_1(t)]+c_*(1+t)^2(\|\nabla^2 u\|_{H^1}^2+\|\nabla^2 \r\|_{L^2}^2)
\le C(1+t)^{-\frac32}, \quad t\ge T_1.
\end{equation*}
Integrating the above inequality over $[T_1,t]$ and using the uniform estimate \eqref{uniform-bound},
we obtain the estimate \eqref{113}.
Using the same method with \eqref{0011}, we also have
\begin{equation}\label{0012}
\begin{split}
\int_0^t(1+t-\tau)^{-\frac54}\|\nabla u\|^{\frac12}_{L^2}\|\nabla^2\r\|_{L^2} d\tau
\le C(\int_0^t(1+t-\tau)^{-\frac52}(1+\tau)^{-2}\|\nabla u\|_{L^2}d\tau)^{\frac12}.
\end{split}
\end{equation}
This together with \eqref{0010} and \eqref{0011}, and using decay estimate \eqref{decay}, it follows that
\begin{equation}
  \begin{split}
  \|(\r_\delta, m_\delta)(t)\|_{L^2}
  &\le C\int_0^t(1+t-\tau)^{-\frac54}\|(\r, u)\|^2_{H^1}d\tau\\
  &\quad +C \left\{\int_0^t(1+t-\tau)^{-\frac52}(1+\tau)^{-2}\|\nabla(\r, u)\|^2_{L^2}d\tau \right\}^{\frac12}\\
  &\quad +C\left\{\int_0^t(1+t-\tau)^{-\frac52}(1+\tau)^{-2}\|\nabla u\|_{L^2}d\tau\right\}^{\frac12}\\
  &\le C\int_0^t(1+t-\tau)^{-\frac54}(1+\tau)^{-\frac32}d\tau\\
  &\quad +C\left\{\int_0^t(1+t-\tau)^{-\frac52}(1+\tau)^{-\frac{11}{4}}d\tau \right\}^{\frac12}\\&
  \quad +C\left\{\int_0^t(1+t-\tau)^{-\frac52}(1+\tau)^{-\frac72}d\tau \right\}^{\frac12} \\&
   \le C(1+t)^{-\frac54}.
   \end{split}
  \end{equation}
This completes the proof of this lemma.
\end{proof}
Finally, we establish the lower bound of decay rate for the global solution of
compressible Navier-Stokes equations \eqref{cns1}.

  \begin{lemm}\label{lemma28}
  Let $p=1$, and assume all the assumptions of Theorem \ref{Thm1} hold on. Denote $m_0:=\rho_0u_0$, assume that the Fourier transform $\mathcal{F}(\r_0, m_0)=(\widehat{\r_0}, \widehat{m_0})$
  satisfies $|\widehat{\r_0}|\ge c_0$, $\widehat{m_0}=0,0 \le |\xi| \ll 1$, with $c_0>0$ a constant.
  Then, it holds on for large time $t$
  \begin{gather}\label{281}
   c_3(1+t)^{-\frac34}\le\|\r (t)\|_{L^2}\le C_1 (1+t)^{-\frac34};\\
   \label{282}
   c_3(1+t)^{-\frac34}\le\| u(t)\|_{L^2}\le C_1 (1+t)^{-\frac34}.
  \end{gather}
  Here $c_3$ and $C_1$ are constants independent of time.
  \end{lemm}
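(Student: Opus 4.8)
The plan is to combine the reverse-triangle-inequality decomposition sketched in the introduction with the three ingredients already in hand: the sharp linear lower bound \eqref{312} of Proposition \ref{decay-linear}, the strictly faster decay \eqref{decay-difference} of the nonlinear difference from Lemma \ref{lemma27}, and the upper decay \eqref{decay} of Theorem \ref{Thm1}. The upper bounds in \eqref{281} and \eqref{282} are immediate: taking $p=1$ in \eqref{decay} gives $\beta(1)=\frac34$, whence $\|\r(t)\|_{L^2}+\|u(t)\|_{L^2}\lm\|\r(t)\|_{H^1}+\|u(t)\|_{H^1}\lm C_1(1+t)^{-\frac34}$. Only the lower bounds require genuine work.

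For the density I would write $\r=\r_l+\r_\delta$ and use
\begin{equation*}
\|\r(t)\|_{L^2}\lg\|\r_l(t)\|_{L^2}-\|\r_\delta(t)\|_{L^2}.
\end{equation*}
The hypotheses $|\widehat{\r_0}|\lg c_0$ and $\widehat{m_0}=0$ near the origin are exactly those of Proposition \ref{decay-linear} with $\eta=0$, so \eqref{312} yields $\|\r_l(t)\|_{L^2}\lg c_1(1+t)^{-\frac34}$, while \eqref{decay-difference} gives $\|\r_\delta(t)\|_{L^2}\lm C(1+t)^{-\frac54}$. Hence $\|\r(t)\|_{L^2}\lg c_1(1+t)^{-\frac34}-C(1+t)^{-\frac54}$, and once $t$ is large enough that $C(1+t)^{-\frac12}\lm\frac{c_1}{2}$, the second term is absorbed, leaving $\|\r(t)\|_{L^2}\lg\frac{c_1}{2}(1+t)^{-\frac34}$.

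For the velocity the same argument first produces a lower bound for the momentum $m=\rho u$: from $\|m_\delta(t)\|_{L^2}\lm C(1+t)^{-\frac54}$ and $\|m_l(t)\|_{L^2}\lg c_1(1+t)^{-\frac34}$ (again by \eqref{312}), the identical absorption gives $\|m(t)\|_{L^2}\lg\frac{c_1}{2}(1+t)^{-\frac34}$ for large $t$. The only extra step is to pass from $m$ to $u$. Writing $m=(1+\r)u$ and invoking the interpolation bound \eqref{4},
\begin{equation*}
\|m(t)\|_{L^2}\lm(1+\|\r\|_{L^\infty})\|u(t)\|_{L^2}\lm\bigl(1+C\|\r\|_{L^2}^{\frac14}\bigr)\|u(t)\|_{L^2},
\end{equation*}
and since $\|\r\|_{L^2}\to0$ the prefactor is at most $2$ for large $t$, so $\|u(t)\|_{L^2}\lg\frac12\|m(t)\|_{L^2}\lg\frac{c_1}{4}(1+t)^{-\frac34}$. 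Taking $c_3=\frac{c_1}{4}$ and enlarging the threshold time if necessary closes both lower bounds.

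The main obstacle is not any single inequality but the structural requirement that the difference $(\r_\delta,m_\delta)$ decay strictly faster than the linear part; the gap between $(1+t)^{-\frac54}$ and $(1+t)^{-\frac34}$ is precisely what makes the subtraction argument close. This faster decay, delivered by Lemma \ref{lemma27}, itself rests on the improved derivative decay \eqref{decay2}, so the delicate analysis truly lives upstream in controlling $\|\nabla F\|_{L^2}$ through the quadratic-in-solution estimates. Granting those, the present argument reduces to the clean absorption steps above, together with the minor nonlinear correction relating the momentum $m$ to the velocity $u$.
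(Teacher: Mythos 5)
Your proposal is correct and follows essentially the same route as the paper: upper bounds from \eqref{decay} with $p=1$, lower bounds via the reverse triangle inequality combining the linear lower bound \eqref{312} with the faster decay \eqref{decay-difference} of the difference, first for $\varrho$ and $m$, then passing to $u$. The only (harmless) deviation is the last step: where the paper bounds $\|m\|_{L^2}\le\|u\|_{L^2}+\|\varrho\|_{L^3}\|u\|_{L^6}\le\|u\|_{L^2}+C(1+t)^{-\frac32}$, obtaining an additive correction, you use the multiplicative bound $\|m\|_{L^2}\le(1+\|\varrho\|_{L^\infty})\|u\|_{L^2}$ with \eqref{4} and the vanishing of $\|\varrho\|_{L^2}$ --- both absorptions close identically for large $t$.
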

  \begin{proof}
 The upper bounds of decay rates \eqref{281} and \eqref{282} have been given
 in estimate \eqref{decay} for the case $p=1$.
 In the sequel, we will establish the lower bounds of decay rates in \eqref{281} and \eqref{282}.
 Remember the definition
 $\r_\delta:=\r-\r_l$ and $m_\delta:=m-m_l$, then it holds on
 \begin{equation*}
 \|\r_l\|_{L^2}\le\|\r\|_{L^2}+\|\r_\delta\|_{L^2}, \quad  \|m_l\|_{L^2}\le\|m\|_{L^2}+\|m_\delta\|_{L^2},
 \end{equation*}
 which, together with lower bound decay \eqref{312} and upper bound decay \eqref{decay-difference}, yields
 \begin{equation}\label{1112}
 \|\r(t)\|_{L^2} \lg \|\r_l(t)\|_{L^2}-\|\r_\delta(t)\|_{L^2}
 \lg c_1(1+t)^{-\frac34}-C(1+t)^{-\frac54}\lg c_2(1+t)^{-\frac34},
 \end{equation}
 and
 \begin{equation*}
 \|m(t)\|_{L^2} \lg \|m_l(t)\|_{L^2}-\|m_\delta(t)\|_{L^2}
 \lg c_1(1+t)^{-\frac34}-C(1+t)^{-\frac54}\lg c_2(1+t)^{-\frac34},
 \end{equation*}
 for large time $t$.
 Recall that $m:=\rho u$, then by using decay estimate \eqref{decay}, we have
 \begin{equation*}
 \|m(t)\|_{L^2}\le\|u(t)\|_{L^2}+\|\r(t)\|_{L^3}\|u(t)\|_{L^6}
               \le \|u(t)\|_{L^2}+C\|\r(t)\|_{H^1}\|\nabla u(t)\|_{L^2}\le \|u(t)\|_{L^2}+C(1+t)^{-\frac32}.
 \end{equation*}
For large time $t$, it follows that
 \begin{equation}\label{1113}
 \|u(t)\|_{L^2}
 \lg\|m(t)\|_{L^2}-C(1+t)^{-\frac32}\lg c_2(1+t)^{-\frac34}-\frac{C}{(1+t)^{\frac34}}(1+t)^{-\frac34}
\lg c_3(1+t)^{-\frac34}.
 \end{equation}
 The combination of \eqref{1112} and \eqref{1113} yields directly
 \begin{equation*}
 \min\{\|\r(t)\|_{L^2},\|u(t)\|_{L^2}\}\lg c_3(1+t)^{-\frac34}.
 \end{equation*}
Therefore, we complete the proof of this lemma.
\end{proof}
\section*{Acknowledgements}

The authors would like to thank Dr. Jingchi Huang sincerely for pointing out that it is maybe possible to improve their decay result in \cite{he-huang-wang} and also for his valuable suggestion. Jincheng Gao's research was partially supported by
Fundamental Research Funds for the Central Universities of China(Grant No.18lgpy66)
and NSFC(Grant Nos.11571380 and 11801586).
Zhengzhen Wei's research was partially supported by NSFC(Grant No.11701585).
Zheng-an Yao's research was partially supported by NSFC(Grant Nos.11431015 and 11971496).

\phantomsection
\addcontentsline{toc}{section}{\refname}


\begin{thebibliography}{99}
\bibitem{duyi}
Y.Du,
Space-time regularity of the Koch \& Tataru solutions to Navier-Stokes equations,
Nonlinear Anal. 104 (2014) 124-132.

\bibitem{duan1}
R.J.Duan, H.X.Liu, S.Ukai, T.Yang,
Optimal $L^p-L^q$ convergence rate for the compressible Navier-Stokes equations with potential force,
J. Differential Equations 238 (2007) 220-223.

\bibitem{duan2}
R.J.Duan, S.Ukai, T.Yang, H.J.Zhao,
Optimal convergence rate for  compressible Navier-Stokes equations with potential force,
Math. Models Methods Appl. Sci. 17 (2007) 737-758.


\bibitem{guo-wang}
Y.Guo, Y.J.Wang,
Decay of dissipative equations and negative Sobolev spaces,
Comm. Partial Differential Equations 37 (2012) 2165-2208.

\bibitem{he-huang-wang}
L.B.He, J.C.Huang, C.Wang,
Global stability of large solutions to the 3D compressible Navier-Stokes equations,
Arch. Ration. Mech. Anal. 234(3) (2019) 1167-1222.

\bibitem{Hu-Wu}
X.P.Hu, G.C.Wu,
Global existence and optimal decay rates for three-dimensional compressible viscoelastic flows,
SIAM J. Math. Anal. 45(5) (2013) 2815-2833.

\bibitem{Huang-li}
X.D.Huang, J.Li,
Existence and blowup behavior of global strong solutions to the two-dimensional barotrpic compressible Navier-Stokes system with vacuum and large initial data,
J. Math. Pures Appl. 106(1) (2016) 123-154.

\bibitem{Huang-li-xin}
X.D.Huang, J.Li, Z.P.Xin,
Global well-posedness of classical solutions with large oscillations and vacuum to the three deimensional isentropic compressible Navier-Stokes equations,
Comm. Pure Appl. Math. 65(4) (2012) 549-585.

\bibitem{Kagei-Kobayashi-2002}
Y.Kagei, T.Kobayashi,
On large time behavior of solutions to the compressible Navier-Stokes equations in the half space in $\mathbb{R}^3$,
Arch. Rational Mech. Anal. 165 (2002) 89-159.


\bibitem{kawa}
M.Kawashita,
On global solutions of Cauchy problems for compressible Navier-Stokes equations,
Nonlinear Anal. 48 (2002) 1087-1105.

\bibitem{li hl}
H.H.Kong, H.L.Li, C.C.Liang, G.J.Zhang,
Global existence and exponential stability for the compressible Navier-Stokes equations with discontinuous data,
J. Differential Equations 263(7) (2017) 4267-4323.

\bibitem{li-zhang1}
H.L.Li, T.Zhang,
Large time behavior of isentropic compressible Navier-Stokes system in $\R^3$,
Math. Methods Appl. Sci. 34(6) (2011) 670-682.

\bibitem{Li-Matsumura-Zhang}
H.L.Li, A.Matsumura, G.J.Zhang,
Optimal decay rate of the compressible Navier-Stokes-Poisson system in $\R^3$,
Arch. Ration. Mech. Anal. 196(2) (2010) 681-713.


\bibitem{liu-wang}
T.P.Liu, W.K.Wang,
The pointwise estimates of diffusion waves for the Navier-Stokes equations in odd multi-dimensions,
Comm. Math. Phys. 196 (1998) 145-173.

\bibitem{matsumura}
A.Matsumura, T.Nishida,
The initial value problem for the equations of motion of compressible viscous and heatconductive fluids,
Proc. Japan Acad. Ser. A Math. Sci. 55(9) (1979) 337-342.

\bibitem{nishida}
A.Matsumura, T.Nishida,
The initial value problem for the equations of motion of viscous and heat-conductive gases,
J. Math. Kyoto Univ. 20(1) (1980) 67-104.

\bibitem{nash}
J.Nash,
Le probl\`eme de Cauchy pour les \'equations diff\'erentielles d'un fluide g\'en\'eral,
Bulletin de la Soc. Math. de France. 90 (1962) 487-497.

\bibitem{titi}
M.Oliver, E.Titi,
Remark on the rate of decay of higher order derivatives for solutions to the Navier-Stokes equations in $\R^n$,
J. Funct. Anal. 172(1) (2000) 1-18.

\bibitem{ponce}
G.Ponce,
Global existence of small solution to a class of nonlinear evolution equations,
Nonlinear Anal. 9 (1985) 339-418.

\bibitem{schonbek1}
M.E.Schonbek,
$L^2$ decay for weak solutions of the Navier-Stokes equations,
Arch. Rational Mech. Anal. 88 (1985) 209-222.

\bibitem{schonbek3}
M.E.Schonbek,
Large time behaviour of solutions to the Navier-Stokes equations,
Comm. Partial Differential Equations 11 (7)(1986) 733-763.


\bibitem{Schonbek-JAMS}
M.E.Schonbek,
Lower bounds of rates of decay for solutions to the Navier-Stokes equations,
J. Am. Math. Soc. 4(3) (1991) 423-449.

\bibitem{schonbek2}
M.E.Schonbek,
Large time behaviour of solutions to the Navier-Stokes equations in $H^m$ spaces,
Comm. Partial Differential Equations, 20 (1995) 103-117.

\bibitem{Schonbek-Wiegner}
M.E.Schonbek, M.Wiegner,
On the decay of higher-order norms of the solutions of Navier-Stokes equations,
Proc. R. Soc. Edinb. Sect. A 126(3) (1996) 677-685.

\bibitem{sun-wang1}
Z.Y.Sun, C.Wang, Z.F.Zhang,
A Beale-Kato-Majda blow-up criterion for the 3-D compressible Navier-Stokes equations,
 J. Math. Pure Appl. 95(1) (2011) 36-47.

\bibitem{sun-wang2}
Z.Y.Sun, C.Wang, Z.F.Zhang,
A Beale-Kato-Majda criterion for three dimensional compressible viscous heat-conductive flows,
Arch. Ration. Mech. Anal. 201(2) (2011) 727-742.

\bibitem{wang hq}
Z.Tan, H.Q.Wang,
Global existence and optimal decay rate for the strong solutions in $H^2$ to the 3-D compressible Navier-Stokes equations without heat conductivity,
J. Math. Anal. Appl. 394(2) (2012) 571-580.

\bibitem{u-y-z}
S.Ukai, T.Yang, H.J.Zhao,
Convergence rate for the compressible Navier-Stokes equations with external force,
J. Hyperbolic Differ. Equ. 3(3) (2006) 561-574.

\bibitem{valli}
A.Valli,
Periodic and stationary solutions for compressible Navier-Stokes equations via a statiblity method,
Ann. Sc. Norm. Super. Pisa, Cl. Sci. 10(4) (1983) 607-647.

\bibitem{wang yj}
Y.J.Wang, Z.Tan,
Global existence and optimal decay rate for the strong solutions in $H^2$ to the compressible Navier-Stokes equations,
Appl. Math. Lett. 24(11) (2011) 1778-1784.

\bibitem{wen}
H.Y.Wen, C.J.Zhu,
Blow-up criterions of strong solutions to 3D compressible Navier-Stokes equations with vacuum,
 Adv. Math. 248 (2013) 534-572.

\bibitem{zhang}
T.Zhang,
Global solutions of compressible Navier-Stokes equations with a density-dependent viscosity coefficient,
J. Math. Phys. 52(4) (2011) 043510, 26pp.

\bibitem{Zhang-Li-Zhu}
G.J.Zhang, H.L.Li, C.J.Zhu,
Optimal decay rate of the non-isentropic compressible Navier-Stokes-Poisson system in $\mathbb{R}^3$,
J. Differential Equations 250(2)(2011) 866-891.

\end{thebibliography}
\end{document}